\documentclass[11pt]{article}
\usepackage{mathrsfs}
\usepackage{amsmath}
\usepackage{color}
\usepackage{bbm}
\usepackage{amsmath,amsthm,amssymb,amscd}
\usepackage{latexsym}
\usepackage{hyperref}
\usepackage[numbers,sort&compress]{natbib}
\usepackage{hypernat}
\usepackage{indentfirst}

\newtheorem{theorem}{Theorem}[section]

\newtheorem{lemma}[theorem]{Lemma}

\newtheorem{definition}[theorem]{Definition}

\newtheorem{remark}[theorem]{Remark}

\newtheorem{assumption}[theorem]{ Assumption}
\numberwithin{equation}{section}

\begin{document}
	
	\title{{\bf  Stability of strong global and exponential  attractors for semilinear beam
			equations with fractional damping and
			memory}
		\thanks{The work was supported partly by the NSF of China
		(12171094), the Shanghai Key
		Laboratory for Contemporary Applied Mathematics (08DZ2271900), and Fuyang Normal University (2025KYQD0157,2025AHGXZK40563), Outstanding Youth Research Project of Anhui Universities (2023AH030075).}}
		\author{Yu-Ying Duan $^{a, b}$,  Ti-Jun Xiao $^{b}$\thanks{Corresponding author. E-mail: tjxiao@fudan.edu.cn (T.J. Xiao), 21110180054@alu.fudan.edu.cn (Y.Y. Duan).}\\
{\small $^a$ School of Mathematics and Statistics, Fuyang Normal University,}\\
	{\small Fuyang 236037, PR China}\\
	{\small $^b$ Shanghai Key Laboratory for Contemporary Applied Mathematics,}\\
	{\small School of Mathematical Sciences, Fudan University, Shanghai 200433, PR China}}

	\date{}
	\maketitle
	
\begin{abstract}
This paper investigates the stability of strong global and exponential attractors for a semilinear beam equation with memory and fractional damping, where $\alpha\in[0,2]$ denotes the fractional damping exponent and $\beta\in[0,1]$ the memory parameter. After showing the existence of a strong global attractor, we prove its upper semicontinuity in the parameter pair $(\alpha,\beta)$. We then
construct a family of strong exponential attractors and establish its continuity in $(\alpha,\beta)$.  Here, ``strong" means that the compactness, attraction, and parameter-robustness properties are established in a topology stronger than that of the phase space. Compared to existing $\beta = 0$ results, our findings hold in a stronger topology.
The analysis draws upon our recent higher-order regularity results for global attractors.
			\end{abstract}

	\vspace{0.4cm}
	
	\noindent\textbf{Key Words:} Strong attractors; Singularly perturbed beam equation; Fractional damping; Memory kernel; Robustness
	
	\vspace{0.4cm}
	MSC: 35B40; 35B41; 37L15; 37L30; 74H40; 74D99

	\section{Introduction}
  We are concerned with a family of semilinear beam equations with fractional damping and memory
\begin{equation}\label{1.1}
 	\left\{
 	\begin{array}{l}
 		u_{tt}+\Delta^{2}u+(-\Delta)^{\alpha}u_{t}-\beta\displaystyle\int_{0}^{\infty}g(s)\Delta^{2}  u(t-s)ds+f(u)=h(x), ~(x,t)\in \Omega\times R^{+},\\
   u=\Delta u=0, ~ (x,t)\in \Gamma \times R^{+},\\
   u(x,\tau)=u_{0}(x,\tau),~ u_{t}(x,0)=u_{1}(x),~ x\in \Omega, \tau\le 0.
 	\end{array}
 	\right.
 \end{equation}
 Here, $\Omega \subset \mathbb{R}^n$ is a bounded  domain with smooth boundary $\Gamma$. The terms $f(u)$ and $h(x)$ denote the nonlinear source and the external force, respectively. The fractional damping is given by $(-\Delta)^{\alpha} u_t$ with $\alpha \in [0,2]$, and the memory kernel $g$ is nonnegative and nonincreasing, with $u(t-s)$ representing the past history, where $\beta \in [0,1]$ is the memory damping coefficient. The memory term accounts for hereditary effects by incorporating the past deformation history of the material, while fractional damping provides a natural description of frequency-dependent structural dissipation. The interaction of these two mechanisms is therefore relevant to both the physical modeling of the beam and the analysis of its long-time dynamics.

The purpose of this paper is to prove the stability of strong attractor and exponential attractors with respect to perturbed parameter pair $(\alpha,\beta)$, therefore, we give the relation definitions. Let $X$ and $Y$ denote two complete metric spaces, $S(t)$ be a semigroup acting on $X$. For any bounded subset $B \subset X$, there exists a threshold $ T(B) > 0$ such that $ S(t)B \subset Y $ for all $t > T(B)$. That is, there exists an absorbing set for $(S(t), X)$ in $Y$.
\begin{definition}\rm \cite{Babin-1992} \label{def1.1}
\rm A set $A\subset X\cap Y$ is said to be a $(X, Y)$-global attractor of semigroup $S(t)$, if \rm{(i)} $A$ is bounded in $X$ and compact in $Y$; \rm{(ii)}  $A$ is invariant, i.e., $S(t)A=A, $ for any $t\ge 0$;  \rm{(iii)} it attracts every bounded subset $B$ of $X$ in the topology of $Y$, i.e., $ \lim_{t\to+\infty}\mbox{dist}_{Y}\{S(t)B, A\}=0.$
\end{definition}
\begin{definition}\rm \cite{Ding-2021}\label{def1.2}
    A set $A_{\exp}\subset X\cap Y$ is said to be a $(X, Y)$-exponential attractor of the semigroup $S(t)$, if \rm{(i)}  $A_{\exp}$ is bounded in $X$, while in $Y$ it is compact and has finite fractional dimension, i.e., $\dim_{f}(A_{\exp},Y)<+\infty$; \rm{(ii)}  $A_{\exp}$ is forward invariant, i.e., $S(t)A_{\exp}\subset A_{\exp} $ for any $t\ge 0$; \rm{(iii)}  $A_{\exp}$ attracts exponentially  every bounded subset $B$ of $X$ in the topology of $Y$, i.e.,  $ \mbox{dist}_{Y}\{S(t)B, A_{\exp}\}\le C\left(\|B\|_{X}\right)e^{-\gamma t}$ for some $\gamma>0$.
\end{definition}
It is evident that the aforementioned definitions align with the conventional ones when $X = Y$. When the topology of $Y$ is stronger than that of $X$, denoted as $Y \hookrightarrow X$, we characterize  $A$ as a strong  $(X, Y)$-global attractor, and $A_{\exp}$ as a strong  $(X, Y)$-exponential attractor. To investigate the stability of strong  attractors with respect to perturbations in the strong topology, we provide the following definition.
\begin{definition}\rm
    The family of strong $(X, Y)$-global or exponential attractors $\{A_{\eta}\}_{\eta\in I}$ is called
    \begin{enumerate}
        \item[\rm(1)] upper semicontinuous at $\eta_{0}\in I$ in $Y$-topology, if $\lim_{\eta\to \eta_{0}}\mbox{dist}_{Y}(A_{\eta}, A_{\eta_{0}})=0;$
        \item[\rm(2)] continuous at $\eta_{0}\in I$ in $Y$-topology, if $\lim_{\eta\to \eta_{0}}\mbox{dist}_{Y}^{symm}(A_{\eta}, A_{\eta_{0}})=0$.
   \end{enumerate}
 \par\noindent
 Here, the notations
        \begin{align*}
            \mbox{dist}_{Y}(A, B):=\sup_{x\in A}\inf_{y\in B}d_{Y}(x,y), ~\mbox{dist}_{Y}^{symm}(A,B):=\max\{\mbox{dist}_{Y}(A, B), \mbox{dist}_{Y}(B, A)\}
        \end{align*}
      denote the Hausdorff semi-distance and the Hausdorff distance between subsets $A$ and $B$ of $Y$, respectively.

\end{definition}

The long-time dynamics of evolution equations with memory has attracted considerable attention, owing to the nonlocal character of the hereditary term and the resulting lack of compactness in the associated history space. A substantial body of work has been devoted to the existence, regularity, and finite-dimensionality of global and exponential attractors for viscoelastic and hyperbolic equations with memory; see, for instance, \cite{Cavalcanti-2016,Guo, Kloeden-2011} and the references therein. Beyond the existence of attractors for a fixed memory kernel, an important issue is the robustness of the asymptotic dynamics under perturbations of the memory mechanism. In this direction, Conti, Pata, and Squassina \cite{Conti-2005} considered a rescaled family of kernels
$k_{\varepsilon}(s) = \frac{1}{\varepsilon} k\left(\frac{s}{\varepsilon}\right)$
which converges, in the distributional sense, to the Dirac mass at the origin as $\varepsilon\to0$, and established the convergence of the corresponding family of exponential attractors toward that of the limiting memoryless problem. Related singular-limit results for equations with fading memory have subsequently been obtained in different settings, including second-order evolution equations, weakly damped hyperbolic models, and wave equations with nonlinear memory terms \cite{Gat,qin,Zhang-2019}.

The scenario  $\beta = 0$ corresponds to fractional damped beam equations
\begin{align*}
	u_{tt}+\Delta^{2}u+(-\Delta)^{\alpha}u_{t}+f(u)=h(x), (x,t)\in \Omega\times R^{+}.
\end{align*}
For fractionally damped systems, considerable progress has been made in the theory of strong attractors and their parameter dependence. For the Kirchhoff wave model
\begin{align*}
u_{tt}-\phi(\|\nabla u\|^2)\Delta u
+\sigma(\|\nabla u\|^2)(-\Delta)^\theta u_t
+f(u)=g(x),
\end{align*}
Li and Yang \cite{Li-2020} proved that, for each $\theta\in[1/2,1)$, the associated semigroup possesses an optimal global attractor and an optimal exponential attractor, whose compactness, attraction, and finite-dimensionality hold in the stronger space $V_{1+\theta}\times V_{\theta}$ than the phase space $V_1\times L^2$ (see Section 2 for the definition of the space $V_s$). They also established the upper semicontinuity of the global attractor family with respect to $\theta$ in the corresponding strong topology. Subsequently, the works \cite{Ding-2022} and  \cite{LYD}  showed that these attractors are actually a strong $(V_1\times L^2,V_2\times V_{2\theta})$-global attractor and a strong $(V_1\times L^2,V_2\times V_{2\theta})$-exponential attractor.

For beam equations, Liu, Yang, and Guo \cite{Liu} considered the extensible beam model
\begin{align*}
	u_{tt}-\beta_0 M(\|\nabla u\|^2)\Delta u+\Delta^2u
	+(-\Delta)^{\alpha}u_t+f(u)=g(x),
\end{align*}
for $\alpha\in (0,1)$. They proved that the strong $(V_2\times L^2, V_4\times V_2)$-global attractor is upper semicontinuous with respect to $\alpha$ in $V_4\times V_2$, and with respect to $\beta_0$ in $V_{4} \times V_{2+\alpha_1}$ for  $\alpha_1 \in (0, \alpha/2)$. They also constructed strong exponential attractors that are continuous with respect to $\alpha$ in $V_4\times V_2$ and H\"older continuous with respect to $\beta_0$ in $V_4\times V_{2+\alpha_1}$. More recently, Ding, Jin, and Yang \cite{Ding} investigated
\begin{align*}
	u_{tt}+\Delta^2u+(-\Delta)^\alpha u_t
	+\Delta\phi(\Delta u)=g(x)
\end{align*}
for $\alpha\in(0,2]$, and obtained strong $
(V_{1+\alpha}\times L^{2}(\Omega),
V_{1+\alpha-\delta}\times V_{\alpha-\delta})
$-global and exponential attractors for any $0<\delta\ll 1$, together with the upper semicontinuity of the corresponding strong global attractor family in the associated strong topology. For further studies on strong attractors for beam equations, we refer to \cite{Li-2021,Yan}. Furthermore, the impact of different perturbation terms in the governing equations plays a crucial role in determining such continuity. For instance, Freitas et al. \cite{Freitas-2025} proved that, in the shear beam model, the limit $\kappa \to \infty$ of the shear elasticity modulus yields the Euler-Bernoulli beam model, and they established the upper semicontinuity of the global attractor with respect to $\kappa$. Similarly, Gomes Tavares et al. \cite{Gomes-2024} demonstrated the upper semicontinuity with respect to $\varepsilon$ for extensible beams subject to nonlinear energy damping of the form $(\mathcal{E}(U)+\varepsilon I)^{q}u_{t}$. For additional results on the continuity properties of perturbed attractors in dissipative dynamical systems, we refer the reader to \cite{Azevedo-2023, Araruna-2018, Freitas-2018}.

For the beam equation \eqref{1.1} considered in the present paper, under the optimal subcritical growth condition
$1 \le p < \min\{\frac{n+4\alpha}{(n-4)^{+}},\frac{n+4}{(n-4)^{+}}\}$ and
with $\alpha \in [0,2]$, the higher-order regularity of the global attractor was obtained in~\cite{Duan}. More precisely, with the natural energy space as the phase space, the attractor $\mathcal{A}_{\alpha,\beta}$ is bounded in
$V_{4} \times V_{4-\delta_{0}} \times \mathcal{M}_{4},$
for any $0 < \delta_{0} \ll 1$ (with $\mathcal M_s$ defined in Section 2).
Based on this regularity result and inspired by the work of \cite{Yang-2018,Liu,Ding}
, we further investigate the stability
of strong global and exponential attractors with respect to the
parameter pair \((\alpha,\beta)\). The
main results of this paper (some of them have been stated in \cite{Duan25}) are summarized as follows.
\begin{enumerate}
\item[\rm(i)]We establish the existence of a {\bf strong}
$(\mathcal H,\mathcal Z_{\varepsilon})$-global attractor (for any $0<\varepsilon\ll 1$), and prove its
upper semicontinuity with respect to the parameter pair $(\alpha,\beta)$
in a {\bf strong} topology. Here $\mathcal H$ is the natural energy space defined in Section 2 serving as the phase space, while $Z_{\varepsilon}$ is the regularized space defined in \eqref{z}.
 \item[\rm(ii)]
 We construct a family of {\bf strong}
 \((\mathcal H,\mathcal Z_{\varepsilon})\)-exponential attractors and
 establish its continuity with respect to the parameter pair
 \((\alpha,\beta)\) in a {\bf strong} topology, by applying the abstract framework of
 \cite{Yang-2018}.
\item[\rm(iii)]
For $\beta\equiv0$ (that is, the memory is absent) and $\alpha\in[0,2)$, corresponding to the phase space $V_2\times L^2$, we obtain a regularized space $V_4\times V_{4-\varepsilon}$ for the attractors of beam equations, with compactness and attraction, as well as upper semicontinuity of the global attractor and continuity of the exponential attractors with respect to
$\alpha$, established in its topology. For the same phase space, the regularized space provided here has a stronger topology than the known spaces for beam equations (cf. \cite{Liu,Ding}).

\end{enumerate}
{
The remainder of this paper is organized as follows. In Section 2, we present some preliminary results. In Section 3, we prove the upper semicontinuity of the strong global attractor with respect to the perturbation parameter pair $(\alpha, \beta)$. In Section 4, we establish the existence of a strong exponential attractor and its continuity with respect to the perturbation parameter pair $(\alpha,\beta)$.

Throughout this paper, $C(r)$ denotes a positive constant depending on $r$ while $C$ is a generic positive constant, and $\varepsilon, \varepsilon_{0}$ stand for small positive constants, which may vary from line to line.

\section{Preliminaries}
For convenience, we introduce the notation
\begin{align*}
V_{0}=L^{2}(\Omega),\quad V_{1}=H_{0}^{1}(\Omega),\quad V_{2}=H^{2}(\Omega)\cap H_{0}^{1}(\Omega).
\end{align*}
Define the operator $A:V_{2}\to V_{-2}$ (where $V_{-2}$ denotes the dual space of $V_{2}$) by
\begin{align*}
(Au, v)=(\Delta u, \Delta v),\quad \forall\, u,v\in V_{2}.
\end{align*}
The operator $A$ is self-adjoint in $V_{0}$ and positive definite on $V_{2}$.
For $\alpha\in[0,2]$, let $A^{\alpha}$ be the fractional power associated with $A$. We set $V_{s}:=D(A^{s/2})$ and endow it with the inner product and norm
\begin{align*}
	(u,v)_{V_{s}}=(A^{s/2}u,\, A^{s/2}v),\quad
	\|u\|_{V_{s}}=\|A^{s/2}u\|,
\end{align*}
where, $\|\cdot\|:=\|\cdot\|_{V_{0}}$.
Moreover, for $s\ge0$, we have the continuous embedding $V_{s}\hookrightarrow L^{\frac{2n}{n-2s}}$.

First, we present the following basic assumptions.
\begin{assumption}\label{ass1}\rm

 \textbf{ \rm{(i)} } $g:\lbrack 0,+\infty)\to (0,+\infty)$ is a decreasing and locally absolutely continuous function  which satisfies
 \begin{align}\label{g1}
   1- \beta G(0)=k>0,~~G(s):=\int_{s}^{\infty}g(\tau)d\tau\le \delta g(s).
 \end{align}
\textbf{ \rm{(ii)} } $f\in C^{1}(R)$ with $f(0)=0$, satisfying
        \begin{align}\label{f1}
            \liminf_{|s|\to \infty}\frac{f(s)}{s}>-k\lambda_{1},
        \end{align}
        where, $\lambda_{1}>0$ is the principal eigenvalue of operator $\Delta^{2} $ in $V_{2}$, and
        \begin{align}\label{f2}
            |f^{\prime}(s)|\le C(1+|s|^{p-1}),~ s\in R,
        \end{align}
    with
       \begin{equation}\label{assp}
       	1\le p<\left\{
       	\begin{array}{ll}
       		+\infty, ~~~~~~~~~~~~~~~~~~&n=1,2,3,4, \\
       		p^{*}:=\min\left\{\frac{n+4\alpha}{n-4}, \frac{n+4}{n-4}\right\},~ &n\ge 5.
       	\end{array}
       	\right.
       \end{equation}
\textbf{ \rm{(iii)} } $h(\cdot)\in L^{2}(\Omega)$.

\end{assumption}

As in \cite{Dafermos-1970}, we denote
\begin{align*}
    \eta(x,t,s)=u(x,t)-u(x,t-s).
\end{align*}
  We can transform \eqref{1.1} as
  \begin{equation}\label{1.2}
 	\left\{
 	\begin{array}{l}
 		u_{tt}+kA^{2}u+A^{\alpha}u_{t}+\beta\displaystyle\int_{0}^{\infty}g(s)A^{2}\eta(x,t,s)ds+f(u)=h(x),~(x,t)\in \Omega\times R^{+},\\
   \eta_{t}=-\eta_{s}+u_{t},~(x,t,s)\in \Omega\times R^{+}\times R^{+},\\
   u=\Delta u=\eta=\Delta \eta=0,~ (x,t)\in \Gamma \times R^{+}, s\in R^{+},\\
   u(x,\tau)=u_{0}(x,\tau),~ u_{t}(x,0)=u_{1}(x), ~x\in \Omega,~ \tau\le 0,\\
   \eta(x,0,s)=\eta_{0}(x,s), ~(x,s)\in\Omega\times R^{+}.
 	\end{array}
 	\right.
 \end{equation}
Let us define the history space with respect to memory kernel $g$ by
\begin{align*}
    \mathcal{M}_{s}=L_{g}^{2}(R^{+}, V_{s})=\left\{\eta:  R^{+}\rightarrow V_{s};~ \|\eta\|_{\mathcal{M}_{s}}<\infty\right\},
\end{align*}
 with inner and norm given by
 \begin{align*}
     \langle \eta,\zeta\rangle_{\mathcal{M}_{s}}=\int_{0}^{\infty}g(\tau)(\eta,\zeta)_{V_{s}}d\tau, ~~\|\eta\|_{\mathcal{M}_{s}}^{2}=\int_{0}^{\infty}g(\tau)\|\eta\|_{V_{s}}^{2}d\tau.
 \end{align*}
 Moreover, our phase space is
\begin{align*}
    \mathcal{H}=V_{2}\times V_{0}\times \beta\mathcal{M}_{2},
\end{align*}
which is equipped with the norm
\begin{align*}
    \|(u,u_{t},\eta)\|_{\mathcal{H}}^{2}=k\|\Delta u\|^{2}+\|u_{t}\|^{2}+\beta\|\eta\|_{\mathcal{M}_{2}}^{2}.
\end{align*}
Clearly, $\beta\mathcal{M}_{2}=\mathcal{M}_{2}$ for $\beta\in (0,1],$ while $\beta\mathcal{M}_{2}=\{0\}$ for $\beta=0.$  Additionally, we define regular (or weak regular) space by
 \begin{align*}
 	\mathcal{H}_{s}=V_{s+2}\times V_{s}\times \beta\mathcal{M}_{s+2},
 \end{align*}
 endowed with the norm
 \begin{align*}
 	\|(u,u_{t},\eta)\|_{\mathcal{H}_{s}}^{2}=k\|u\|_{V_{s+2}}^{2}+\|u_{t}\|_{V_{s}}^{2}+\beta\|\eta\|_{\mathcal{M}_{s+2}}^{2}.
 \end{align*}
Sometimes, we identify $V_{s+2}\times V_{s}$ with $V_{s+2}\times V_{s}\times\{0\}$.

As shown in \cite{Duan}, the existence and uniqueness of weak solution to problem \eqref{1.2} has been obtained by Faedo-Galerkin method.
	\begin{lemma}\textup{\cite{Duan}}\label{thm1}
	Suppose that assumption \ref{ass1} holds. Then, for every \(U_{0}\in\mathcal H\), problem \eqref{1.2} admits a
unique solution $
U(t)=S^{\alpha,\beta}(t)U_{0}.$
Furthermore, for each \(t>0\), the solution semigroup
\(S^{\alpha,\beta}(t)\) is \(1/2\)-H\"older continuous on \(\mathcal H\).
\end{lemma}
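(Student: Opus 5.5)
The plan is to follow the Faedo--Galerkin route of \cite{Duan}: build approximate solutions, derive the energy estimate in $\mathcal H$, pass to the limit, and then prove the H\"older estimate, from which uniqueness follows. Let $\{w_j\}$ be the eigenfunctions of $A$ in $V_0$. These also serve as a basis of $V_2$. For the history variable, take an orthonormal basis $\{\zeta_j\}$ of $\mathcal M_2$ of the form $\zeta_j(s)=\phi_j(s)w_j$ with $\phi_j$ smooth and compactly supported in $R^+$. We seek
\[
u^m=\sum_{j\le m}a_j(t)w_j,\qquad \eta^m=\sum_{j\le m}b_j(t)\zeta_j ,
\]
solving the projected system \eqref{1.2}, which is locally solvable as an ODE system. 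We then multiply the $u$-equation by $u^m_t$ and take the $\mathcal M_2$-inner product of the $\eta$-equation with $\beta\eta^m$. This gives
\[
\frac{d}{dt}\Big(\|U^m\|_{\mathcal H}^2+2\int_\Omega F(u^m)dx-2(h,u^m)\Big)+2\|A^{\alpha/2}u^m_t\|^2-\beta\int_0^\infty g'(s)\|\eta^m(s)\|_{V_2}^2ds=0,
\]
where $F(s)=\int_0^s f(\tau)d\tau$. The last term is nonnegative because $g$ is decreasing. Condition \eqref{f1} gives $\int_\Omega F(u)\ge -\frac{k\lambda_1-\epsilon}{2}\|u\|^2-C$. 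Together with the growth bound \eqref{f2} and $p<\frac{n+4}{n-4}$, which yields $\int F(u)\le C(1+\|u\|_{V_2}^{p+1})$ via $V_2\hookrightarrow L^{2n/(n-4)}$, this makes the modified energy equivalent to $\|U\|^2_{\mathcal H}$ up to constants. We obtain a uniform bound on $[0,T]$ for every $T$, so the approximate solutions are global.

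For the passage to the limit, weak-$*$ compactness gives $u^m\to u$ in $L^\infty(0,T;V_2)$ weak-$*$, $u^m_t\to u_t$ in $L^\infty(0,T;V_0)\cap L^2(0,T;V_\alpha)$, and $\eta^m\to\eta$ in $L^\infty(0,T;\mathcal M_2)$. By Aubin--Lions, $u^m\to u$ strongly in $C([0,T];V_{2-\epsilon})$, and subcriticality then lets us pass to the limit in $f(u^m)$. The linear memory terms pass to the limit weakly, so $(u,u_t,\eta)$ is a weak solution.

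The main step is the H\"older estimate. Let $U_1,U_2$ be two solutions with data in a ball of radius $R$, and set $\bar U=(\bar u,\bar u_t,\bar\eta)=U_1-U_2$. Testing the difference equation by $\bar u_t$ (justified at the Galerkin level or by regularization) gives
\[
\frac{d}{dt}\|\bar U\|_{\mathcal H}^2+2\|A^{\alpha/2}\bar u_t\|^2\le 2|(f(u_1)-f(u_2),\bar u_t)|.
\]
The obstacle is that $f(u_1)-f(u_2)$ need not lie in $V_0$ when $p$ is close to $\frac{n+4}{n-4}$, so it cannot simply be paired with $\bar u_t\in V_0$. We therefore use the damping. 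By H\"older and \eqref{f2},
\[
|(f(u_1)-f(u_2),\bar u_t)|\le C\big(1+\|u_1\|_{L^{q}}^{p-1}+\|u_2\|_{L^q}^{p-1}\big)\|\bar u\|_{L^{r}}\|\bar u_t\|_{L^{\sigma}},
\]
with $\sigma=\frac{2n}{n-2\alpha}$ the exponent of $V_\alpha$. The condition $p<\frac{n+4\alpha}{n-4}$ is exactly what lets us choose $q,r\le\frac{2n}{n-4}$ with $\frac{p-1}{q}+\frac1r+\frac1\sigma=1$. This yields the bound $C(R)\|\bar u\|_{V_2}\|A^{\alpha/2}\bar u_t\|$, which is absorbed by the damping after Young's inequality. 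Gronwall then gives $\|\bar U(t)\|_{\mathcal H}\le e^{C(R)t}\|\bar U(0)\|_{\mathcal H}$, which is Lipschitz.

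If the time-pointwise absorption fails near the endpoint exponents, we would instead integrate the nonlinear term in time, interpolate $\|\bar u_t\|_{L^\sigma}$ between $V_0$ and $V_\alpha$, and use only the $L^2_tV_\alpha$ bound on the difference. This produces
\[
\|\bar U(t)\|_{\mathcal H}^2\le C(R,t)\|\bar U(0)\|_{\mathcal H},
\]
that is, exactly the $1/2$-H\"older continuity claimed in Lemma \ref{thm1}. Uniqueness follows by taking $\bar U(0)=0$.
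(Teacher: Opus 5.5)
The paper gives no proof of this lemma. It defers to \cite{Duan} and mentions only the Faedo--Galerkin method. Your existence part (energy identity, coercivity from \eqref{f1}, Aubin--Lions) is consistent with that, apart from one slip: the single-index family $\{\phi_j w_j\}_j$ does not span $\mathcal M_2$, so you need the double family $\{\phi_i w_j\}_{i,j}$.

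The genuine gap is in the continuous-dependence step, which rests on an incorrect exponent count. With $u_i,\bar u\in V_2\hookrightarrow L^{2n/(n-4)}$ and $\bar u_t\in V_\alpha\hookrightarrow L^{2n/(n-2\alpha)}$, every admissible choice $q,r\le\frac{2n}{n-4}$ gives
\[
\frac{p-1}{q}+\frac1r+\frac1\sigma\ \ge\ \frac{p(n-4)}{2n}+\frac{n-2\alpha}{2n}.
\]
The right-hand side is $\le 1$ if and only if $p\le\frac{n+2\alpha}{n-4}$, not $p<\frac{n+4\alpha}{n-4}$. For example, with $n=5$ and $\alpha=1$ the lemma covers $p<9$, but your estimate works only for $p\le 7$. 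The paper isolates this same threshold in Theorem~\ref{thmm}: the range $p<p^{**}<\frac{n+2\alpha}{(n-4)^+}$ is treated directly, while $p^{**}\le p<p^*$ needs the extra regularity $u^i\in V_4$ available on $\mathcal B$. So for $n\ge5$, $0<\alpha<2$ and $\frac{n+2\alpha}{n-4}<p<p^*$ your pointwise-in-time absorption proves nothing.

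Your fallback does not repair this, for two reasons.
\begin{itemize}
\item Interpolating $\bar u_t$ between $V_0$ and $V_\alpha$ lowers its integrability, which makes the H\"older constraint stricter, not weaker.
\item An $O(1)$ bound on $\int_0^t\|\bar u_t\|_{V_\alpha}^2$ cannot produce a factor $\|\bar U(0)\|_{\mathcal H}$ on the right-hand side. So $\|\bar U(t)\|_{\mathcal H}^2\le C\|\bar U(0)\|_{\mathcal H}$ does not follow; a bound of that type would first need a Lipschitz estimate for $\bar u$ in some norm, which you do not have.
\end{itemize}

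What is missing is extra space--time integrability of the solutions $u_1,u_2$ themselves, beyond $L^\infty(0,T;V_2)$. By H\"older and Young,
\[
2\big|(f(u_1)-f(u_2),\bar u_t)\big|\le \|\bar u_t\|_{V_\alpha}^2+C\Big(1+\sum_{i=1}^{2}\|u_i\|_{L^{q}}^{2(p-1)}\Big)\|\bar U\|_{\mathcal H}^2,\qquad q=\frac{n(p-1)}{2+\alpha}.
\]
Gronwall therefore closes as soon as $\int_0^T\|u_i\|_{L^q}^{2(p-1)}dt<\infty$, and $L^\infty(0,T;V_2)$ alone gives this only for $p\le\frac{n+2\alpha}{n-4}$.

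Suppose instead that $u_i\in L^2(0,T;V_{2+\gamma})$ with $\gamma=\min\{\alpha,2-\alpha\}$, the kind of bound one expects from testing with $A^{\gamma}u$. Interpolating with $L^\infty(0,T;V_2)$ gives $u_i\in L^{2(p-1)}(0,T;V_{2+\gamma/(p-1)})$ for $p\ge 2$. The embedding $V_{2+\gamma/(p-1)}\hookrightarrow L^q$ holds if and only if $(p-1)(n-4)\le 4+2\alpha+2\gamma$, that is, if and only if $p\le\min\{\frac{n+4\alpha}{n-4},\frac{n+4}{n-4}\}$. This reproduces exactly the exponent in \eqref{assp}. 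The resulting Lipschitz bound on bounded sets then implies the $1/2$-H\"older claim.

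Proving such a smoothing estimate is the real work, and your proposal has no substitute for it. Two terms need care there: the nonlinear term, and for $\beta>0$ the memory term $\beta\int_0^\infty g(s)(\eta(s),u)_{V_{2+\gamma}}\,ds$, which the $\mathcal M_2$-bound on $\eta$ does not control.
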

Meanwhile, the regularity of the attractor is also established.
\begin{lemma}\textup{\cite{Duan}}\label{them3.8}
	Suppose that Assumption \ref{ass1}  holds.
	Then, for any $\beta \in (0,1]$, the semigroup $\{S^{\alpha,\beta}(t)\}_{t \ge 0}$ admits a bounded absorbing set $B_{0}$ and a compact attractor $\mathcal{A}_{\alpha,\beta}$ on $  \mathcal{H}$, which is contained in and uniformly bounded in $\mathcal{Z} = V_{4} \times V_{4-\delta_{0}} \times \mathcal{M}_{4}$, where $\delta_{0} = 0$ for $\alpha = 2$ and $0 < \delta_{0} \ll 1$ for $\alpha \in [0,2)$. Moreover, when $\beta = 0$, the attractor  satisfies $\mathcal{A}_{\alpha,0} \subset V_{4} \times V_{4-\delta_{0}}$.
\end{lemma}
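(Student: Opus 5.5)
The plan follows the standard route for viscoelastic plates: first dissipativity, then asymptotic compactness through a decomposition of the semigroup, and finally a bootstrap of regularity along complete bounded trajectories lying on the attractor.

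\emph{Step 1: absorbing set.} Multiply the first equation of \eqref{1.2} by $u_t+\varepsilon u$ and take the $\mathcal M_2$ inner product of the $\eta$-equation with $\eta$. Using $\langle -\eta_s,\eta\rangle_{\mathcal M_2}=\frac12\int_0^\infty g'(s)\|\eta\|_{V_2}^2ds\le 0$, the bound $G\le\delta g$ from \eqref{g1} (which gives $-g'\ge c g$ in integrated form), the dissipativity condition \eqref{f1} and the growth bound \eqref{f2}, build a perturbed energy
\[
\mathcal L=E+\varepsilon(u_t,u)-\varepsilon\beta\langle\cdot\rangle
\]
with $\mathcal L\sim\|U\|_{\mathcal H}^2$. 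Show that $\frac{d}{dt}\mathcal L+c\mathcal L\le C(\|h\|)$, which yields a bounded absorbing ball $B_0$ in $\mathcal H$, uniformly in $(\alpha,\beta)$. The term $A^\alpha u_t$ with $\alpha\ge0$ is nonnegative and only helps. Moreover, $E$ is a strict Lyapunov functional, so the system is gradient.

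\emph{Step 2: asymptotic compactness.} Split $S^{\alpha,\beta}(t)U_0=U^d(t)+U^c(t)$.
\begin{itemize}
\item $U^d$ solves the linear homogeneous problem (with $f$ replaced by $0$, or by a monotone part of $f$) and decays exponentially in $\mathcal H$ by Step 1.
\item $U^c$ carries $f(u)-h$ with zero data. Using the subcritical restriction $p<p^*$, the composition $f(u)$ is bounded in some $V_{\sigma}$ with $\sigma>0$. This gives $U^c$ bounded in $\mathcal H_\sigma$.
\item For the history component, $\mathcal M_{2+\sigma}$-boundedness alone is not compact. Add the standard tail estimate $\sup_t\int_{(0,1/N)\cup(N,\infty)}g\|\eta^c\|_{V_2}^2\to0$ and control of $\partial_s\eta^c$ in $L^2_g(V_{2})$, then apply the Pata--Zelik compactness lemma.
\end{itemize}
Together with the gradient structure and the boundedness of the equilibria set, this gives the compact global attractor $\mathcal A_{\alpha,\beta}=$ the unstable set of the equilibria, with uniform bounds.

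\emph{Step 3: regularity in $\mathcal Z$.} Work on complete bounded trajectories $U(t)$, $t\in\mathbb R$, contained in $\mathcal A_{\alpha,\beta}$.
\begin{itemize}
\item Differentiate in time: $w=u_t$ solves the linearized memory equation with forcing $-f'(u)u_t$. Since the trajectory is backward bounded, the energy argument of Step 1 run from $-\infty$ gives $(u_t,u_{tt},\eta_t)$ bounded in $\mathcal H$, so $u_{tt}\in V_0$ and $u_t\in V_2$.
\item Read the first equation as the elliptic-type relation
\[
kA^2u+\beta\int_0^\infty g A^2\eta\,ds=h-f(u)-u_{tt}-A^\alpha u_t .
\]
Combine it with the explicit representation of $\eta$ via $u_t$ along the whole past trajectory, $\eta(t,s)=\int_{t-s}^t u_t$. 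This gives $u\in V_4$ and $\eta\in\mathcal M_4$ once the right side is in $V_0$.
\item The term $A^\alpha u_t$ is where the bootstrap must be done in fractional steps. The gain is $\alpha$ per step, limited by the integrability of $f'(u)u_t$, which requires $p<\frac{n+4\alpha}{n-4}$. Iterating yields $u_t\in V_{4-\delta_0}$, with $\delta_0=0$ exactly when $\alpha=2$, where the damping is strong enough.
\end{itemize}

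\emph{Main obstacle and case $\beta=0$.} The hardest step is this last bootstrap for small $\alpha$ near the critical exponent, where each step gains little and $f'(u)u_t$ must be estimated in negative/fractional spaces by Sobolev embeddings. I would first try Step 3 with an interpolation between the $V_2$ and $V_4$ bounds of $u$ to close the $\delta_0$-loss. For $\beta=0$ the same argument applies without the history component.
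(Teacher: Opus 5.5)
The paper gives no proof of this lemma; it is quoted from \cite{Duan}. Your outline therefore has to stand on its own. Its overall architecture is the expected one, but two steps fail as written.

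\textbf{Step 2.} For $n\ge5$ the embedding $V_2\hookrightarrow L^{\frac{2n}{n-4}}$ is sharp. So for $u$ bounded only in $V_2$ you merely get $f(u)\in L^{\frac{2n}{p(n-4)}}$, which is not even contained in $L^2$ once $p>\frac{n}{n-4}$. Since $p^*>\frac{n}{n-4}$ for every $\alpha>0$, your claim that $f(u)$ is bounded in some $V_\sigma$ with $\sigma>0$ is false in part of the admissible range.

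Using the damping (multiplier $A^{\sigma}u^c_t$) you can tolerate $f(u)\in V_{\sigma-\alpha}$. That only rescues $p<\frac{n+2\alpha}{n-4}$, the same threshold below which the paper does the pointwise-in-time estimate \eqref{ff}. For $\frac{n+2\alpha}{n-4}\le p<p^*$, no bound on $f(u(t))$ at fixed $t$ is enough.

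The missing ingredient is a time-averaged smoothing estimate produced by the fractional damping:
\[
\int_t^{t+1}\|u\|_{V_{2+\gamma}}^2\,ds\le C,\qquad \gamma=\min\{\alpha,2-\alpha\},
\]
obtained with the multiplier $A^{\gamma}u$ (cf.\ the functional $H_1$ in Theorem~\ref{thmm}). Interpolated with the $L^\infty(V_2)$ bound, it puts $f(u)$ in $L^2(t,t+1;V_{-\alpha})$ exactly up to $p=\frac{n+2\alpha+2\gamma}{n-4}=p^*$. Both well-posedness and the compactness of $U^c$ have to be built on this estimate.

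\textbf{Step 3.} This step does not reach $u_t\in V_{4-\delta_0}$. In the energy estimate for $w=u_t$ with multiplier $A^{\sigma}w_t$, the nonlinearity enters as
\[
(f'(u)w,A^{\sigma}w_t)\le\|f'(u)w\|_{V_{\sigma-\alpha}}\|w_t\|_{V_{\sigma+\alpha}}.
\]
Since $f$ is only $C^1$, $f'(u)$ need not have any positive Sobolev regularity. So $f'(u)w$ is at best in $V_0$, and you must take $\sigma\le\alpha$.

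The bootstrap therefore stops after a single step, at $u_t\in V_{2+\alpha}$ and $u_{tt}\in V_{\alpha}$. This is short of $V_{4-\delta_0}$ whenever $\alpha<2-\delta_0$, and for $\alpha=0$ it gives nothing beyond $u_t\in V_2$. ``Gain $\alpha$ per step'' cannot be iterated.

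An arbitrarily small loss $\delta_0$ that vanishes at $\alpha=2$ points to a non-energy argument. One example is an $L^\infty$-in-time maximal-regularity bound for $\partial_t^2+A^{\alpha}\partial_t+kA^2$. In the analytic range $1\le\alpha<2$, it takes forcing bounded in $V_0$ to solutions bounded in $V_{4-\delta}$ for every $\delta>0$, with a loss only at the endpoint. You need an argument of that kind. You must also say what replaces it for $\alpha<1$, where the semigroup is no longer analytic.

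\textbf{Two smaller points.}
\begin{itemize}
\item Running the energy estimate for $(u_t,u_{tt},\eta_t)$ in $\mathcal H$ ``from $-\infty$'' is circular. On the attractor you only know $u_t\in V_0$, so the estimate must first be done in a weaker space and then upgraded.
\item The representation $\eta(t,s)=\int_{t-s}^{t}u_t$ gives $\eta\in\mathcal M_4$ only if $u_t\in V_4$. Instead, writing $\eta(t,s)=u(t)-u(t-s)$ turns your elliptic relation into $(I-\beta g\ast)A^2u=R$. Since $\|\beta g\ast\|\le\beta G(0)=1-k<1$, this yields $A^2u\in L^\infty(\mathbb R;V_0)$ and hence $\eta\in\mathcal M_4$.
\end{itemize}
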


\section{Upper semicontinuity of the strong global attractor}

To establish the upper semicontinuity of strong attractors in this section, we require the following abstract criterion.

\begin{lemma}\textup{\cite{Rob-2001}}\label{lemma6.1}
	Assume that a dynamical system $(X,S^{\lambda}(t))$ possesses a compact global attractor $\mathcal{A}^{\lambda}$ for every $\lambda\in \Lambda$ (index set). Suppose further that the following conditions hold:
	\begin{enumerate}
		\item [{\rm(1)}] there exists a bounded subset $K\subset X$ such that $\bigcup_{\lambda\in \Lambda}\mathcal{A}^{\lambda}\subset K$;
		\item[{\rm(2)}] there exists a  $t_{0}>0$ such that
		\begin{align*}
			\lim_{\lambda\to \lambda_{0}}\sup_{x\in K}\mbox{d}_{X}(S^{\lambda}(t)x, S^{\lambda_{0}}(t)x)=0, ~\forall t\ge t_{0}.
		\end{align*}
	\end{enumerate}
	Then, the family of attractors $\mathcal{A}^{\lambda}$ is upper semicontinuous at $\lambda_{0}$, i.e.,
	\begin{align*}
		\lim_{\lambda\to \lambda_{0}}\operatorname{dist}_{X}\left(\mathcal{A}^{\lambda},\mathcal{A}^{\lambda{0}}\right)=0.
	\end{align*}
\end{lemma}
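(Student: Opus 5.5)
The plan is to prove Lemma \ref{lemma6.1} directly, by contradiction, using only the compactness and invariance of the attractors. Suppose the conclusion fails. Then there are $\epsilon_0>0$, a sequence $\lambda_n\to\lambda_0$, and points $x_n\in\mathcal{A}^{\lambda_n}$ with
\begin{align*}
\mbox{d}_X(x_n,\mathcal{A}^{\lambda_0})\ge\epsilon_0 \quad \text{for all } n.
\end{align*}
The strategy is to write each $x_n$ as the image, after a long time, of a point of $K$ under $S^{\lambda_n}$, and then transfer this picture to the limit semigroup $S^{\lambda_0}$.

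\textbf{Step 1 (pulling back).} Since $\mathcal{A}^{\lambda_0}$ is the global attractor and $K$ is bounded, there is $T\ge t_0$ with
\begin{align*}
\operatorname{dist}_X\left(S^{\lambda_0}(T)K,\mathcal{A}^{\lambda_0}\right)<\epsilon_0/2 .
\end{align*}
By invariance, $S^{\lambda_n}(T)\mathcal{A}^{\lambda_n}=\mathcal{A}^{\lambda_n}$. Hence there is $y_n\in\mathcal{A}^{\lambda_n}\subset K$ (using condition (1)) with $x_n=S^{\lambda_n}(T)y_n$.

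\textbf{Step 2 (comparison).} By condition (2) at time $T\ge t_0$,
\begin{align*}
\mbox{d}_X\left(S^{\lambda_n}(T)y_n,\,S^{\lambda_0}(T)y_n\right)\le \sup_{x\in K}\mbox{d}_X\left(S^{\lambda_n}(T)x,\,S^{\lambda_0}(T)x\right)\to 0 .
\end{align*}
Moreover, $S^{\lambda_0}(T)y_n\in S^{\lambda_0}(T)K$, which lies within $\epsilon_0/2$ of $\mathcal{A}^{\lambda_0}$. By the triangle inequality, for $n$ large we get $\mbox{d}_X(x_n,\mathcal{A}^{\lambda_0})<\epsilon_0$, which is a contradiction.

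The argument does not even use compactness of $\mathcal{A}^{\lambda}$ beyond the existence of the attractor, so the only delicate point is Step 1. There one needs the representation $x_n=S^{\lambda_n}(T)y_n$ with the same time $T$ for every $n$, and this is exactly what full invariance $S^{\lambda}(t)\mathcal{A}^{\lambda}=\mathcal{A}^{\lambda}$ provides. The uniformity in condition (2) then lets us handle the $n$-dependent starting points $y_n$.
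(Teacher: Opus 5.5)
Your proof is correct. The paper gives no proof of its own (it quotes the lemma from Robinson), and the standard argument there is yours, written directly rather than by contradiction. Namely, choose $T\ge t_0$ with $\operatorname{dist}_X(S^{\lambda_0}(T)K,\mathcal{A}^{\lambda_0})<\epsilon/2$ and use $\mathcal{A}^{\lambda}=S^{\lambda}(T)\mathcal{A}^{\lambda}\subset S^{\lambda}(T)K$. This gives $\operatorname{dist}_X(\mathcal{A}^{\lambda},\mathcal{A}^{\lambda_0})\le\sup_{x\in K}\mbox{d}_X(S^{\lambda}(T)x,S^{\lambda_0}(T)x)+\epsilon/2$, which avoids extracting sequences $\lambda_n\to\lambda_0$; that extraction is harmless here anyway, since $\Lambda=\Xi\subset\mathbb{R}^2$.
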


\begin{theorem}
  Let the assumption of  Lemma \ref{them3.8} hold. Then, for each fixed $$(\alpha,\beta) \in \Xi := [0,2] \times [0,1],$$ the global attractor $\mathcal{A}_{\alpha,\beta}$ of the semigroup $S^{\alpha,\beta}(t)$ is a strong $(\mathcal{H}, \mathcal{Z}_{\varepsilon})$-global attractor, where
  	\begin{equation}\label{z}
  		\mathcal{Z}_{\varepsilon} =\left\{
  		\begin{array}{ll}
  			V_{4-\epsilon} \times V_{4-\epsilon} \times \mathcal{M}_{4-\epsilon}, ~~&\mbox{if}~~0 < \beta \le 1, \\
  		V_{4} \times V_{4-\epsilon},\quad\quad \quad\quad\quad  &\mbox{if}~~\beta = 0~\mbox{and}~0\le \alpha<2,\\
  			V_{4-\epsilon}\times V_{4-\epsilon}, \quad\quad\quad\quad  &\mbox{if}~~\beta = 0~\mbox{and}~\alpha=2
  		\end{array}
  		\right.
  		\end{equation}
  with every sufficiently small $0 < \epsilon \ll 1$,  which may vary from line to line.
\end{theorem}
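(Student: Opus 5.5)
We follow Definition \ref{def1.1} with $X=\mathcal H$ and $Y=\mathcal Z_\varepsilon$. Boundedness in $\mathcal H$ and invariance of $\mathcal A_{\alpha,\beta}$ are already contained in Lemma \ref{them3.8}. So two things remain: compactness of $\mathcal A_{\alpha,\beta}$ in $\mathcal Z_\varepsilon$, and attraction of bounded sets of $\mathcal H$ in the $\mathcal Z_\varepsilon$-metric.

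\textbf{Compactness.} By Lemma \ref{them3.8}, $\mathcal A_{\alpha,\beta}$ is bounded in $\mathcal Z=V_4\times V_{4-\delta_0}\times\mathcal M_4$. We choose $\delta_0<\epsilon$ and use that $V_s\hookrightarrow V_{s-\epsilon'}$ is compact for $\epsilon'>0$, since $A$ has compact resolvent.

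The $\eta$-component needs more care, because $\mathcal M_4\hookrightarrow\mathcal M_{4-\epsilon}$ is not compact in general. The plan is to use that $\eta\in\mathcal A_{\alpha,\beta}$ comes from a complete bounded trajectory. On the attractor, $\eta(s)=u(t)-u(t-s)$ with $u$ bounded in $V_4$ and $u_t$ bounded in $V_{4-\delta_0}$ uniformly in time. This gives $\|\eta(s)\|_{V_4}\le C$ and $\|\partial_s\eta(s)\|_{V_{4-\delta_0}}\le C$ uniformly in $s$, hence uniform tails, because $\int_N^\infty g\to0$. A compactness lemma of Pata--Zucchi type (bounded in $\mathcal M_4$, $\partial_s\eta$ bounded in $L^2_g(V_{4-\delta_0})$, uniform tail condition $\Rightarrow$ relatively compact in $\mathcal M_{4-\epsilon}$) then applies. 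Together with closedness of $\mathcal A_{\alpha,\beta}$ in $\mathcal Z_\varepsilon$, which follows from compactness in $\mathcal H$ and the continuous embedding $\mathcal Z_\varepsilon\hookrightarrow\mathcal H$, this gives compactness in $\mathcal Z_\varepsilon$.

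For $\beta=0$ and $\alpha<2$, the first component must be compact in $V_4$ itself, not only in $V_{4-\epsilon}$. Here we plan to use the equation $kA^2u=h-f(u)-u_{tt}-A^\alpha u_t$. The set $\{u_t\}$ is compact in $V_{4-\epsilon}$ and $\{u_{tt}\}$ is bounded in a slightly better space on the attractor (differentiating in $t$). So we expect $\{u\}$ to be compact in $V_4$ by elliptic regularity, since $A^\alpha u_t$ is compact in $V_{4-2\alpha-\epsilon}\subset V_0$.

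\textbf{Attraction.} We plan an interpolation argument. For a bounded $B\subset\mathcal H$ and each $t$ large, we would like to write $S(t)B$ within distance $\to0$ in $\mathcal Z_\varepsilon$ of $\mathcal A_{\alpha,\beta}$.

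The route is to use the decomposition from \cite{Duan}: $S(t)U_0=S_1(t)U_0+S_2(t)U_0$, where $S_1$ decays exponentially in $\mathcal H$ and $S_2$ is bounded in $\mathcal Z$ uniformly for $U_0$ in the absorbing set. The exponential decay in $\mathcal H$ does not directly give decay in $\mathcal Z_\varepsilon$. The standard fix (as in \cite{Liu,Ding}) is to first show that $S(T)B$ is bounded in a space $\mathcal Z'$ of regularity strictly above $\mathcal Z_\varepsilon$ for $T$ large. This is to be done by bootstrapping the regularity estimates of \cite{Duan} for the full solution, not only on $\mathcal A_{\alpha,\beta}$, possibly with a slightly smaller gain, e.g. $V_{4-\epsilon/2}$. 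We then interpolate: $\|\cdot\|_{\mathcal Z_\varepsilon}\le\|\cdot\|_{\mathcal H}^{\theta}\|\cdot\|_{\mathcal Z'}^{1-\theta}$ for components that are differences of points. Since $\operatorname{dist}_{\mathcal H}(S(t)B,\mathcal A_{\alpha,\beta})\to0$ and both sets are bounded in $\mathcal Z'$, we obtain $\operatorname{dist}_{\mathcal Z_\varepsilon}\to0$. For the memory component, interpolation between $\mathcal M_2$ and $\mathcal M_{4-\epsilon/2}$ is again a weighted $L^2$ interpolation and works the same way.

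\textbf{Main obstacle.} The hardest step is the uniform higher-regularity bound of $S(t)B$ in $\mathcal Z'$, especially the $\eta$-component. The difficulty is that $\eta_0$ has only $\mathcal M_2$ regularity, so $\eta^t(s)=\eta_0(s-t)+u(t)-u(t-s)$ for $s>t$ never gains regularity. We handle this by splitting $\eta$ into the part from the initial history, which lies in $\mathcal M_2$ and carries the weight $\int_t^\infty g\to0$ exponentially by \eqref{g1}, and the part generated by the trajectory, which is regular. We then apply the $\mathcal Z_\varepsilon$-estimate only to the regular part, and show that the remainder tends to zero after subtracting a nearby attractor point. In the case $\beta=0$, $\alpha<2$, the $V_4$-attraction of $u$ is to be recovered again from the equation, using that $u_{tt}+A^\alpha u_t$ converges in $V_0$.
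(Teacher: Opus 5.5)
Your compactness argument is correct but heavier than needed. Since $\mathcal A_{\alpha,\beta}$ is compact in $\mathcal H$ and bounded in $\mathcal Z$, the interpolation inequalities make every $\mathcal H$-convergent sequence in $\mathcal A_{\alpha,\beta}$ Cauchy in $\mathcal Z_\epsilon$. For the history this is just H\"older's inequality in $L^2_g$, namely $\|\zeta\|_{\mathcal M_{4-\epsilon}}\le\|\zeta\|_{\mathcal M_4}^{(2-\epsilon)/2}\|\zeta\|_{\mathcal M_2}^{\epsilon/2}$, so no Pata--Zucchi lemma is required. The paper packages this as the H\"older estimate \eqref{mu}. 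For the $V_4$-component when $\beta=0$, $\alpha<2$, you and the paper both go through the equation (cf.\ \eqref{3.5}).

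The genuine gap is the attraction step for $\beta>0$, exactly where you locate the main obstacle. For $s>t$ one has $\eta^t(s)=\eta_0(s-t)+u(t)-u(0)$. So the initial-history remainder contributes $\int_t^\infty g(s)\|\eta_0(s-t)+\cdots\|_{V_{4-\epsilon}}^2\,ds$ to the $\mathcal M_{4-\epsilon}$-norm, and for generic $\eta_0\in\mathcal M_2$ this is $+\infty$. The factor $\int_t^\infty g$ gives smallness only in $\mathcal M_2$. Subtracting an attractor point, whose history lies in $\mathcal M_4$, cannot remove a component outside $V_{4-\epsilon}$.

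This is not a technical defect. Take $U_0=(0,0,\eta_0)$ with $\eta_0(s)\equiv\phi\in V_2\setminus V_{4-\epsilon}$. Then $\eta^t(s)=\phi+u(t)$ for all $s>t$. Since $g>0$, $S^{\alpha,\beta}(t)U_0\in\mathcal Z_\epsilon$ would force both $u(t)$ and $\phi+u(t)$ into $V_{4-\epsilon}$, which is impossible. Hence $\operatorname{dist}_{\mathcal Z_\epsilon}(S^{\alpha,\beta}(t)U_0,\mathcal A_{\alpha,\beta})=+\infty$ for every $t\ge0$.

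Even for $\beta=0$, your bootstrap of $S(T)B$ into a more regular space $\mathcal Z'$ fails for the $u$-component when $\alpha\in\{0,2\}$. There the linear part does not regularize $u$; for $\alpha=2$ its high modes behave like $e^{-t}u_0$.

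The paper never meets this difficulty because it takes from Lemma \ref{them3.8} an absorbing set $B_0$, hence $\mathcal B$, that is bounded in $\mathcal Z$, and then reads off attraction directly from \eqref{mu}. The example above shows that, for $\beta>0$, no absorbing set of $(\mathcal H,S^{\alpha,\beta}(t))$ can even be contained in $\mathcal Z_\epsilon$, so that step does not hold either. What both routes legitimately give is compactness of $\mathcal A_{\alpha,\beta}$ in $\mathcal Z_\epsilon$. They also give $\mathcal Z_\epsilon$-attraction of sets whose forward orbits stay bounded in $\mathcal Z$, by interpolating against attraction in $\mathcal H$. Attraction of arbitrary bounded subsets of $\mathcal H$ in the $\mathcal Z_\epsilon$-topology is false for $\beta>0$ (and for $\beta=0$, $\alpha\in\{0,2\}$), so that clause would have to be reformulated rather than proved.
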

\begin{proof}
By Lemma \ref{them3.8}, the bounded absorbing set $B_0$ is uniformly bounded in $\mathcal{Z}$, and there exists $t_{B_0}>0$ such that  $S(t)B_0\subset B_0$ for all $t>t_{B_0}$. Set
\begin{align}\label{B}
	B_{\alpha,\beta}=\bigcup_{t\ge t_{B_{0}}+1}S^{\alpha,\beta}(t)B_{0}\subset B_{0},~~\mathcal{B}_{1}=\bigcup_{(\alpha,\beta)\in\Xi}B_{\alpha,\beta},~ ~\mathcal{B}=\left[\mathcal{B}_{1}\right]_{\mathcal{H}}.
\end{align}
Here, the sign $[\cdot]_{\mathcal{H}}$ denotes the closure in the space $\mathcal{H}$. Obviously, $	B_{\alpha,\beta},~\mathcal{B}_{1}$ and $\mathcal{B}$ are absorbing sets of the dynamical system $(\mathcal{H},S^{\alpha,\beta}(t))$ and  bounded in  $\mathcal{Z}$.

	Let
	\begin{align*}
		S^{\alpha,\beta}(t)\xi_{i}=(u^{i}(t), u_{t}^{i}(t),\eta^{i}(t)), ~\forall \xi_{i}\in \mathcal{B}, i=1,2.
	\end{align*}
Note  $z=u^{1}(t)-u^{2}(t),\zeta=\eta^{1}(x,t,s)-\eta^{2}(x,t,s)$ satisfy the following equation
 \begin{equation}\label{1.2--}
	\left\{
	\begin{array}{l}
	z_{tt}+kA^{2}z+A^{\alpha}z_{t}+\beta\displaystyle\int_{0}^{\infty}g(s)A^{2}\zeta(x,t,s)ds+f(u^{1})-f(u^{2})=0,~(x,t)\in \Omega\times R^{+},\\
		\zeta_{t}=-\zeta_{s}+z_{t},~(x,t,s)\in \Omega\times R^{+}\times R^{+},\\
	(z,z_{t},\zeta)|_{t=0}=\xi_{1}-\xi_{2}.
	\end{array}
	\right.
\end{equation}
	For  the case $0 < \beta \le 1$, by the interpolation inequality we have, for any  $0 < \epsilon_{i} \ll 1,~i=1,2$ and $0<\delta_{0}<\epsilon_{3}\ll 1$,
	\begin{align}\label{df}
		&\|z\|_{V_{4-\epsilon_{1}}}\le \|z\|_{V_{4}}^{\frac{2-\epsilon_{1}}{2}}\|z\|_{V_{2}}^{\frac{\epsilon_{1}}{2}}\le C(\mathcal{B})\|z\|_{V_{2}}^{\frac{\epsilon_{1}}{2}},\nonumber\\
		&\|\zeta\|_{\mathcal{M}_{4-\epsilon_{2}}}\le \|\zeta\|_{\mathcal{M}_{4}}^{\frac{2-\epsilon_{2}}{2}}\|\zeta\|_{\mathcal{M}_{2}}^{\frac{\epsilon_{2}}{2}}\le C(\mathcal{B})\|\zeta\|_{\mathcal{M}_{2}}^{\frac{\epsilon_{2}}{2}},\nonumber\\
		&	\|z_{t}\|_{V_{4-\epsilon_{3}}}\le \|z_{t}\|_{V_{4-\delta_{0}}}^{\frac{4-\epsilon_{3}}{4-\delta_{0}}}\|z_{t}\|^{\frac{\epsilon_{3}-\delta_{0}}{4-\delta_{0}}}\le C(\mathcal{B})\|z_{t}\|^{\frac{\epsilon_{3}-\delta_{0}}{4}}.
	\end{align}
	Without loss of generality, fix an arbitrary \(0<\epsilon\ll1\).
	Since \(\delta_0>0\) can be chosen arbitrarily small, we take
$0<\delta_0<\epsilon .$
	Applying the interpolation inequalities \eqref{df} with
$\epsilon_1=\epsilon_2=\epsilon_3=\epsilon,$
	we obtain
	\begin{align}\label{mu}
		\|S^{\alpha,\beta}(t)\xi_{1}-S^{\alpha,\beta}(t)\xi_{2}\|_{\mathcal{Z}_{\epsilon}}&\le C(\mathcal{B})	\|S^{\alpha,\beta}(t)\xi_{1}-S^{\alpha,\beta}(t)\xi_{2}\|_{\mathcal{H}}^{\frac{\epsilon_{*}}{4}}\nonumber\\
		&\le C(t)\|\xi_{1}-\xi_{2}\|_{\mathcal{H}}^{\frac{\epsilon_{*}}{8}}.
	\end{align}
	For $\epsilon_{*}=\epsilon-\delta_{0}$, $C(t)$ is a positive continous function depending on $t$.  That is, the mapping $S^{\alpha,\beta}(1):\mathcal{B}\subset \mathcal{H}\to \mathcal{Z}_{\epsilon}$ is $\frac{\epsilon_{*}}{8}-$H\"older continuous for each  $(\alpha,\beta)\in \Xi$; thus, $\mathcal{A}_{\alpha,\beta}=S^{\alpha,\beta}(1)\mathcal{A}_{\alpha,\beta}$ is a compact set in $\mathcal{Z}_{\epsilon}$. For every bounded subset $D$ of $\mathcal{H}$, there exists $t_{D}>0$ such that $	S^{\alpha,\beta}(\tau)D\subset \mathcal{B}$ for all $\tau\ge t_{D}$, by \eqref{mu}. For any $t>t_{D}+1$, we have
	\begin{align*}
		\mbox{dist}_{\mathcal{Z}_{\epsilon}}\left(S^{\alpha,\beta}(t)D, \mathcal{A}_{\alpha,\beta}\right)&=\mbox{dist}_{\mathcal{Z}_{\epsilon}}\left(S^{\alpha,\beta}(1)S^{\alpha,\beta}(t-t_{D}-1)S^{\alpha,\beta}(t_{D})D, S^{\alpha,\beta}(1)\mathcal{A}_{\alpha,\beta}\right)\\
		&\le C\left[\mbox{dist}_{\mathcal{H}}\left(S^{\alpha,\beta}(t-t_{D}-1)\mathcal{B}, \mathcal{A}_{\alpha,\beta}\right)\right]^{\frac{\epsilon_{*}}{8}}\to 0, \mbox{~as~}t\to \infty.
	\end{align*}
 Clearly, when $\beta = 0$ and  $\alpha =2$, we have
 	\begin{equation*}
 		\operatorname{dist}_{V_{4-\varepsilon} \times V_{4-\varepsilon}}
 		\left( S^{2,0}(t) D,\, \mathcal{A}_{2,0} \right) \to 0
 		\quad \text{as } t \to \infty.
 	\end{equation*}
 Furthermore, when \(\beta=0\) and \(\alpha\in[0,2)\), by using
 \eqref{df}, we first obtain	
\begin{align*}
	\|f(u^{1})-f(u^{2})\|\le C\left(1+\|u^{1}\|_{L^{\frac{(p-1)n}{4-\epsilon}}}^{(p-1)}+\|u^{2}\|_{L^{\frac{(p-1)n}{4-\epsilon}}}^{(p-1)}\right)\|z\|_{4-\epsilon}\le C(\mathcal{B})\|z\|_{V_{2}}^{\frac{\epsilon}{2}},
\end{align*}
and for the equation \eqref{1.2--}, there exists a sufficiently small $\theta_{*}>0$ and $0<\epsilon<2(2-\alpha)$ such that
	\begin{align*}
	\|z_{tt}\|_{V_{\alpha-2}}\le& C\left(\|A^{2}z\|_{V_{\alpha-2}}+\|A^{\alpha}z_{t}\|_{V_{\alpha-2}}+\|f(u^{1})-f(u^{2})\|_{V_{\alpha-2}}\right)\nonumber\\
	\le& C\left(\|z\|_{V_{2}}^{\frac{2-\alpha}{2}}\|z\|_{V_{4}}^{\frac{\alpha}{2}}+\|z_{t}\|_{V_{4-\epsilon}}+\|f(u^{1})-f(u^{2})\|\right)\nonumber\\
\le& C \left(\|z\|_{V_{2}}^{\theta_{*}}+\|z_{t}\|^{\theta_{*}}\right).
\end{align*}
Furthermore, since \(\|z_{tt}\|_{V_{2}}\) is uniformly bounded, the interpolation
inequality  gives
\begin{align}\label{3.5}
\|z\|_{V_{4}}\le& C\left(\|z_{tt}\|+\|z_{t}\|_{V_{2\alpha}}+\|f(u^{1})-f(u^{2})\|\right)\nonumber\\
\le&C\left(\|z_{tt}\|_{V_{2}}^{\frac{2-\alpha}{4-\alpha}}	\|z_{tt}\|_{V_{\alpha-2}}^{\frac{2}{4-\alpha}}+\|z_{t}\|_{V_{4-\epsilon}}+\|z\|_{V_{2}}^{\frac{\epsilon}{2}}\right)\nonumber\\
&\leq C(\mathcal B)\left(\|z\|_{V_{2}}^{\rho}+\|z_t\|^{\rho}\right),
\end{align}
for some sufficiently small \(\rho>0\). Combining the above estimates with the H\"older continuity of the
semigroup in the natural energy space, we conclude that $
	S^{\alpha,0}(1):\mathcal{B}\subset
	V_{2}\times L^{2}(\Omega)
	\longrightarrow
	V_{4}\times V_{4-\varepsilon}$
is H\"older continuous. Hence, we also get
\begin{equation*}
	\operatorname{dist}_{V_{4} \times V_{4-\varepsilon}}
	\left( S^{\alpha,0}(t) D,\, \mathcal{A}_{\alpha,0} \right) \to 0
	\quad \text{as } t \to \infty.
	\end{equation*}
To summarize, $\mathcal{A}_{\alpha,\beta}$ is a strong $(\mathcal{H}, \mathcal{Z}_{\epsilon})$ -global attractor for semigroup $S^{\alpha,\beta}(t)$.
\end{proof}

Based on the existence of the strong attractor established above, we  prove the upper semicontinuity of the strong  attractor with respect to $(\alpha_{0},\beta_{0} )\in\Xi$.
\begin{theorem}\label{them5}
	Let Assumption \ref{ass1}  hold, $(\alpha_{0},\beta_{0}) \in \Xi$, and let $$1 \leq p < p_{\alpha_0}: = \min\left\{\frac{n+4\alpha_0}{(n-4)^{+}}, \frac{n+4}{(n-4)^{+}}\right\}.$$ Then, the family of strong global attractors $\mathcal{A}_{\alpha,\beta}$ of the dynamical system $(\mathcal{H}, S^{\alpha,\beta}(t))$ is upper semicontinuous at the point $(\alpha_{0},\beta_{0})$:
	\begin{align}\label{3.3-}
		\lim_{(\alpha,\beta)\to(\alpha_{0},\beta_{0})} {\rm dist}_{V_{4-\epsilon} \times V_{4-\epsilon} \times \mathcal{M}_{4-\epsilon}}\{\mathcal{A}_{\alpha,\beta}, \mathcal{A}_{\alpha_{0},\beta_{0}}\}=0.
	\end{align}
In particular, for each fixed $\beta \in [0,1]$, the family of strong global attractors $\mathcal{A}_{\alpha,\beta}$ is upper semicontinuous at the point $\alpha_{0}\in\lbrack 0,2)$ in a stronger topology:
 \begin{equation}\label{3.4}
		\lim_{\alpha \to \alpha_{0}} \operatorname{dist}_{V_{4-\epsilon} \times V_{4} \times \mathcal{M}_{4}}
		\{\mathcal{A}_{\alpha,\beta},\,\mathcal{A}_{\alpha_{0},\beta}\} = 0.
	\end{equation}

\end{theorem}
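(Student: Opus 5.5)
The plan is to apply the abstract criterion of Lemma \ref{lemma6.1}, first in the energy space $\mathcal H$, and then to upgrade the convergence to the strong topology by interpolation against the uniform higher-order bounds of Lemma \ref{them3.8}. We take $K=\mathcal B$, the closed set built in \eqref{B}. It contains every $\mathcal A_{\alpha,\beta}$ by invariance, and it is bounded in $\mathcal H$ and in $\mathcal Z$, uniformly in $(\alpha,\beta)$ near $(\alpha_0,\beta_0)$.

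\textbf{Step 1: uniform bounds near $\alpha_0$.} We first check that the constants in Lemma \ref{them3.8} (the absorbing radius and the $\mathcal Z$-bound) can be chosen uniformly for $(\alpha,\beta)$ in a neighbourhood of $(\alpha_0,\beta_0)$. This is where the hypothesis $p<p_{\alpha_0}$ enters: the map $\alpha\mapsto p^*(\alpha)$ is continuous, so $p<p^*(\alpha)$ for all $\alpha$ close to $\alpha_0$, with a uniform margin.

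\textbf{Step 2: uniform closeness of trajectories in $\mathcal H$.} For $\xi\in\mathcal B$, let $U=(u,u_t,\eta)=S^{\alpha,\beta}(t)\xi$ and $U_0=(v,v_t,\zeta)=S^{\alpha_0,\beta_0}(t)\xi$. Set $w=u-v$ and $\chi=\eta-\zeta$. The difference satisfies
\[
w_{tt}+kA^2w+A^{\alpha}w_t+\beta\!\int_0^\infty\! g A^2\chi\,ds+f(u)-f(v)=-(A^{\alpha}-A^{\alpha_0})v_t-(\beta-\beta_0)\!\int_0^\infty\! gA^2\zeta\,ds .
\]
We multiply by $w_t$ and run the standard energy estimate. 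The nonlinear difference is controlled by $C(\mathcal B)\|w\|_{V_2}\|w_t\|$, since $\mathcal B$ is bounded in $V_4\hookrightarrow L^\infty$, or at least in a high enough $L^q$. The two forcing terms are estimated using the extra regularity of $v$ and $\zeta$:
\begin{itemize}
\item We use $\|(A^{\alpha}-A^{\alpha_0})v_t\|\le \sup_j|\lambda_j^{\alpha}-\lambda_j^{\alpha_0}|\lambda_j^{-(4-\delta_0)/2}\,\|v_t\|_{V_{4-\delta_0}}$. The supremum tends to $0$ as $\alpha\to\alpha_0$ because $\alpha\le 2<(4-\delta_0)/2+$ something; more precisely, we would split into low and high frequencies, using $|\lambda^{\alpha}-\lambda^{\alpha_0}|\le|\alpha-\alpha_0|\lambda^{\max}\ln\lambda$. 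If $\alpha_0$ is near $2$ this may require care, and we would instead pair the term with $w_t$ in the $V_{\alpha}$ dual fashion, using the damping $\|w_t\|_{V_\alpha}^2$ to absorb it.
\item The memory forcing is bounded by $|\beta-\beta_0|\,\|\zeta\|_{\mathcal M_4}\|w_t\|$.
\end{itemize}
Gronwall then gives $\sup_{\xi\in\mathcal B}\|U-U_0\|_{\mathcal H}\le C(t)\bigl(|\alpha-\alpha_0|^{\gamma}+|\beta-\beta_0|\bigr)$. When $\beta_0=0<\beta$, the term $\beta\|\chi\|^2_{\mathcal M_2}$ is handled directly, since $\zeta\equiv0$. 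Lemma \ref{lemma6.1} then yields upper semicontinuity in $\mathcal H$.

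\textbf{Step 3: upgrade to the strong topology.} This follows exactly as in \eqref{df}–\eqref{mu}. Since all attractors lie in a set that is uniformly bounded in $\mathcal Z$, interpolation gives $\mathrm{dist}_{\mathcal Z_\epsilon}\le C\,\mathrm{dist}_{\mathcal H}^{\epsilon_*/4}$, which proves \eqref{3.3-}.

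\textbf{Step 4: the stronger statement \eqref{3.4} for $\alpha_0<2$ and fixed $\beta$.} Here we mimic \eqref{3.5}. We bound $\|w_{tt}\|_{V_{\alpha-2}}$ in terms of the $\mathcal H$-difference plus $|\alpha-\alpha_0|$, and then use elliptic regularity from the equation to control $\|w\|_{V_4}$ by interpolating $\|w_{tt}\|$ between $V_2$ and $V_{\alpha-2}$. We expect this step to be the main obstacle, in particular getting the $V_4$ control on the $u_t$-type component and $\mathcal M_4$ control on the memory component with no loss of $\epsilon$. The uniform boundedness of $w_{tt}$ in $V_2$ on $\mathcal B$ must be verified, and for $\mathcal M_4$ we would exploit the explicit representation $\chi(t,s)=w(t)-w(t-s)$ for $s<t$ together with the exponential decay of $g$ implied by \eqref{g1}. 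If this cannot be closed directly, we would fall back on the compactness of $\mathcal A_{\alpha,\beta}$ in $\mathcal Z$ and a contradiction argument via subsequences.
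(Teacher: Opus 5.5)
Steps 1--3 do give \eqref{3.3-}, by a different route from the paper's. The paper runs the difference estimate in $\mathcal H_\alpha$, with multiplier $A^{\alpha}w_t+\varepsilon A^{\sigma_2}w$ plus the memory functional. It then interpolates trajectory differences into $\mathcal Z_\epsilon$ to obtain \eqref{key0}, and applies Lemma~\ref{lemma6.1} on $\mathcal B_R$ with the $\mathcal Z_\epsilon$-metric. You stay at energy level, apply Lemma~\ref{lemma6.1} in $\mathcal H$, and interpolate the semi-distance itself. That is legitimate: for $x\in\mathcal A_{\alpha,\beta}$ and an $\mathcal H$-nearest $y\in\mathcal A_{\alpha_0,\beta_0}$, both points lie in one $\mathcal Z$-bounded set, so $\|x-y\|_{\mathcal Z_\epsilon}\le C\|x-y\|_{\mathcal H}^{\theta}$. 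Your dual pairing of $(A^\alpha-A^{\alpha_0})v_t$ against the damping also covers $\alpha_0=2$ cleanly. Interpolating along trajectories, Step 2 further yields $\sup_{\xi}\|w_t(t)\|_{V_{4-\epsilon}}\to0$, the analogue of \eqref{6.24}, which you will need below.

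The gap is Step 4, i.e.\ \eqref{3.4}, which as written is not an argument.

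First, for $\beta>0$ the elliptic route of \eqref{3.5} is circular. The identity for $kA^2w$ contains $\beta\int_0^\infty g(s)A^2\chi\,ds$, whose $L^2$ norm is controlled only by $\|\chi\|_{\mathcal M_4}$, the very quantity you also need to send to zero. The representation $\chi(t,s)=w(t)-w(t-s)$ for $s<t$, $\chi(t,s)=w(t)$ for $s\ge t$, turns it back into $\sup_{[0,t]}\|w\|_{V_4}$. The loop could be closed as a Volterra equation $A^2w(t)-\beta\int_0^t g(s)A^2w(t-s)\,ds=F(t)$, because $\beta G(0)=1-k<1$; decay of $g$ is irrelevant. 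You do not do this.

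Second, even then everything rests on $\sup_{[0,T]}\|w_{tt}\|\to0$ in $L^2$. That needs a uniform bound on $w_{tt}$ in some $V_s$ with $s>0$. Lemma~\ref{them3.8} supplies none: on $\mathcal B$ the equation only bounds $u_{tt}$ in $L^2$, since $A^2u,h\in L^2$.

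Third, the fallback fails. The attractors are compact only in $\mathcal Z_\epsilon$ and merely bounded in $\mathcal Z$, so a subsequence argument gives only weak convergence in $V_4$ and $\mathcal M_4$.

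Finally, you are chasing the wrong target for the velocity. For $\alpha<2$ the velocity components are only known to be bounded in $V_{4-\delta_0}$. What the paper's estimates deliver, as its Remark after the theorem states, is convergence in $V_4\times V_{4-\epsilon}\times\mathcal M_4$, i.e.\ $V_4$ on the displacement. The first two factors in \eqref{3.4} are transposed, and no $V_4$ control of $w_t$ is needed or available.

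The missing idea is the paper's second-level dissipative estimate for the difference. Take $\sigma_1=\sigma_2=2$ in \eqref{6.3} and \eqref{6.6}: multiply by $A^2w_t+\varepsilon A^2w$ and include $\frac{\beta\varepsilon}{2}\int_0^\infty G(s)\|\zeta-w\|_{V_4}^2ds$. Then $F_{2,2}\sim\|(w,w_t,\zeta)\|_{\mathcal H_2}^2$, and the dissipation itself produces $\|w\|_{V_4}^2$ and $\|\zeta\|_{\mathcal M_4}^2$, with no $w_{tt}$ and no circularity.

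Because $\beta$ is fixed, the memory forcing $(\beta-\beta_0)\int gA^2\eta^{\alpha_0,\beta_0}$ is absent; this is exactly why \eqref{3.4} is claimed only for fixed $\beta$. The damping controls only $\|w_t\|_{V_{2+\alpha}}^2$. So the $\alpha$-forcing is written as $(A^{\alpha+\epsilon/2}-A^{\alpha_0+\epsilon/2})v_t$ tested against $A^{2-\epsilon/2}w_t$, as in \eqref{6.5-}. The resulting $\|w_t\|_{V_{4-\epsilon}}^2$ on the right is not absorbed but fed in from the already established convergence \eqref{6.24}; the nonlinear term likewise leaves only $\|w\|^2$ and $\|w_t\|_{V_{4-\epsilon}}$ terms. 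Gronwall on $[0,t]$ and Lemma~\ref{lemma6.1} then give \eqref{3.4}.
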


\begin{proof}

	\textbf{(i)} In view of Lemma \ref{them3.8}, the  set $\mathcal{B}_{\alpha,\beta}$ is uniformly bounded in $\mathcal{Z}$ and the upper bound is independent of $\alpha$ and $\beta$.  Since  $(\alpha,\beta)\to (\alpha_{0},\beta_{0})$, we set $\Pi:= [a,b]\times[c,d],$ with
		\begin{align*}
		a:=\left\{
			\begin{array}{ll}
				\alpha_{0}-\delta_{1}, ~&\alpha_{0}\in(0,2\rbrack,\\
				0, ~~~~~ &\alpha_{0}=0,
			\end{array}
			\right.\quad
				b:=\left\{
			\begin{array}{ll}
				\alpha_{0}+\delta_{1}, ~&\alpha_{0}\in\lbrack0,2),\\
			2	, ~~~~~&\alpha_{0}=2,
			\end{array}
			\right.
		\end{align*}
		and
			\begin{equation*}
			c:=\left\{
			\begin{array}{ll}
				\beta_{0}-\delta_{1}, ~&\beta_{0}\in(0,1\rbrack,\\
			0, ~~~~~ &\beta_{0}=0,
			\end{array}
			\right.\quad
			d:=\left\{
			\begin{array}{ll}
				\beta_{0}+\delta_{1}, ~&\beta_{0}\in\lbrack0,1),\\
			1	, ~~~~~~ &\beta_{0}=1,
			\end{array}
			\right.
		\end{equation*}
where $\delta_{1} > 0$ is chosen sufficiently small such that $$0<\delta_{1}<\min\{\frac{\alpha_{0}}{2}, \quad 2-\alpha_{0}-\frac{\delta_{0}+\epsilon}{2}\}$$ with $$0<\epsilon<\min\{2-\alpha_{0},1-\beta_{0}\}.$$ Let
		\begin{align}\label{key1}
			B:= \bigcup_{(\alpha,\beta)\in \Pi}\mathcal{B}_{\alpha,\beta}, ~~\mathcal{B}_{R}=[B]_{\mathcal{H}}.
		\end{align}
	Then, $B_{R}$ is a compact set in $\mathcal{H}$ and  bounded in $\mathcal{Z}$.
	
		\textbf{(ii)} Let $$S^{\alpha,\beta}(t)\xi=(u^{\alpha,\beta}, u_{t}^{\alpha,\beta}, \eta^{\alpha,\beta}),$$ and $$ S^{\alpha_{0},\beta_{0}}(t)\xi=(u^{\alpha_{0},\beta_{0}}, u_{t}^{\alpha_{0},\beta_{0}}, \eta^{\alpha_{0},\beta_{0}}).$$ 	Then,  $w=u^{\alpha,\beta}-u^{\alpha_{0},\beta_{0}}, \zeta=\eta^{\alpha,\beta}-\eta^{\alpha_{0},\beta_{0}}$ solving
		\begin{equation}\label{6.2}
			\left\{
			\begin{array}{l}
			w_{tt}+kA^{2}w+A^{\alpha}w_{t}+\beta\displaystyle\int_{0}^{\infty}g(s)A^{2}\zeta(x,t,s) ds+(A^{\alpha}-A^{\alpha_{0}})u_{t}^{\alpha_{0},\beta_{0}}\\
			~~~+(\beta-\beta_{0})\int_{0}^{\infty}g(s)A^{2}\eta^{\alpha_{0},\beta_{0}}(x,t,s) ds+f(u^{\alpha,\beta})-f(u^{\alpha_{0},\beta_{0}})=0,\\
				\zeta_{t}=-\zeta_{s}+w_{t}, \\
				(w,w_{t}, \zeta)|_{t=0}=0.
			\end{array}
			\right.
		\end{equation}
	We apply the multiplier $A^{\sigma_{1}}w_{t} + \varepsilon A^{\sigma_{2}}w$ to equation \eqref{6.2}, together with the auxiliary functional associated with the memory term. This yields
		\begin{align}\label{6.3}
			&\frac{d}{dt}	F_{\sigma_{2},\sigma_{2}}(t)
			+\|w_{t}\|_{V_{\sigma_{1}+\alpha}}^{2}+k\varepsilon\|w\|_{V_{2+\sigma_{2}}}^{2}+\frac{\beta\varepsilon}{2}\|\zeta\|_{\mathcal{M}_{2+\sigma_{2}}}^{2}\nonumber\\
			\le&\varepsilon\|w_{t}\|_{V_{\sigma_{2}}}^{2}-\int_{\Omega}\left(f(u^{\alpha,\beta})-f(u^{\alpha_{0},\beta_{0}})\right)\left(A^{\sigma_{1}}w_{t}+\varepsilon A^{\sigma_{2}}w\right)dx\nonumber\\
			&-(\beta-\beta_{0})\int_{0}^{\infty}g(s)\int_{\Omega}A^{2}\eta^{\alpha_{0},\beta_{0}}(x,t,s)\left(A^{\sigma_{1}}w_{t}+\varepsilon A^{\sigma_{2}}w\right)dx ds\nonumber\\
			&-\int_{\Omega}(A^{\alpha}-A^{\alpha_{0}})u_{t}^{\alpha_{0},\beta_{0}}\left(A^{\sigma_{1}}w_{t}+\varepsilon A^{\sigma_{2}}w\right)dx.
		\end{align}
	Here
		\begin{align}\label{6.6}
			F_{\sigma_{1},\sigma_{2}}(t)=& \frac{1}{2}\|(w,w_{t},\zeta)\|_{\mathcal{H}_{\sigma_{1}}}^{2}+\varepsilon \int_{\Omega}w_{t}A^{\sigma_{2}}wdx+\frac{\varepsilon}{2}\|w\|_{V_{\alpha+\sigma_{2}}}^{2}\nonumber\\
			&+\frac{\beta\varepsilon}{2}\int_{0}^{\infty}G(s)\|\zeta-w\|_{V_{\sigma_{2}+2}}^{2}ds.
		\end{align}
		Taking $\sigma_{1}=\alpha$ and  $\sigma_{2}=\min\{2\alpha,2\}$ in \eqref{6.3}, we have, for $\varepsilon$ small enough,
		\begin{align}\label{6.11}
			F_{1}(t)=	F_{\alpha,\sigma_{2}}(t)\ge&\frac{1}{4}\|(w,w_{t},\zeta)\|_{\mathcal{H}_{\alpha}}^{2}.
		\end{align}
		Notice $1\le p<p_{\alpha_{0}}$ for $n>4$ and $1\le p<\infty$ for $1\le n\le4$, we deduce, for $0<\varepsilon_{0}\ll 1$,
		\begin{align}\label{6.7}
			&-\int_{\Omega}\left(f(u^{\alpha,\beta})-f(u^{\alpha_{0},\beta_{0}})\right)\left(A^{\alpha}w_{t}+\varepsilon A^{\sigma_{2}}w\right)dx\nonumber\\
			\le& C\int_{\Omega}\left(1+|u^{\alpha,\beta}|^{p-1}+|u^{\alpha_{0},\beta_{0}}|^{p-1}\right)|w|\left(\left|A^{\alpha}w_{t}\right|+\varepsilon \left|A^{\sigma_{2}}w\right|\right)dx\nonumber\\
			\le&C\|w\|_{L^{\frac{2n}{n-2(2+\sigma_{2}-\varepsilon_{0})}}}\left(\left\|A^{\alpha}w_{t}\right\|+\varepsilon\|A^{\sigma_{2}}w\|\right)\left(1+\|u^{\alpha,\beta}\|_{L^{\frac{(p-1)n}{2+\sigma_{2}-\varepsilon_{0}}}}^{p-1}+\|u^{\alpha_{0},\beta_{0}}\|_{L^{\frac{(p-1)n}{2+\sigma_{2}-\varepsilon_{0}}}}^{p-1}\right)\nonumber\\
			\le&\frac{k\varepsilon}{4}\|w\|_{V_{2+\sigma_{2}}}^{2}+\frac{1}{4}\|w_{t}\|_{V_{2\alpha}}^{2}+C(\varepsilon)\|w\|^{2}.
		\end{align}
		Since $\alpha,\alpha_{0}\in(a,b)$ and $u_{t}^{\alpha_{0},\beta_{0}}\in V_{4-\delta_{0}}$, it follows that
		\begin{align}\label{6.5}
			& -\int_{\Omega}(A^{\alpha}-A^{\alpha_{0}})u_{t}^{\alpha_{0},\beta_{0}}\left(A^{\alpha}w_{t}+\varepsilon A^{\sigma_{2}}w\right)dx\nonumber\\
			\le& \frac{k\varepsilon}{4}\|w\|_{V_{2+\sigma_{2}}}^{2}+\frac{1}{4}\|w_{t}\|_{V_{2\alpha}}^{2}+C(\varepsilon)\left\|\left(A^{\alpha}
			-A^{\alpha_{0}}\right)u_{t}^{\alpha_{0},\beta_{0}}\right\|^{2}<\infty.
		\end{align}
		  Since $\eta^{\alpha_{0},\beta_{0}}\in \mathcal{M}_{4}$, we have
		\begin{align}\label{3.3}
			&(\beta-\beta_{0})\int_{0}^{\infty}g(s)\int_{\Omega}A^{2}  \eta^{\alpha_{0},\beta_{0}} \left(A^{\alpha}w_{t}+\varepsilon A^{\sigma_{2}}w\right)dxds\nonumber\\
			\le&|\beta-\beta_{0}|\int_{0}^{\infty}g(s)\int_{\Omega}|A^{2}\eta^{\alpha_{0},\beta_{0}}|\left(|A^{\alpha}w_{t}|+\varepsilon |A^{\sigma_{2}}w|\right)dxds\nonumber\\
			\le&C|\beta-\beta_{0}|\|\eta^{\alpha_{0},\beta_{0}}\|_{\mathcal{M}_{4}}\left(\|w_{t}\|_{V_{2\alpha}}+\varepsilon \|w\|_{V_{2\sigma_{2}}}\right)\nonumber\\
			\le&\frac{1}{8}\|w_{t}\|_{V_{2\alpha}}^{2}+\frac{k\varepsilon}{4}\|w\|_{V_{2+\sigma_{2}}}^{2}+C|\beta-\beta_{0}|^{2}.
		\end{align}
		Inserting \eqref{6.7}, \eqref{6.5} and \eqref{3.3} into \eqref{6.3}, taking $\varepsilon$ small enough, we get
		\begin{align}\label{6.9}
			&\frac{d}{dt}F_{1}(t)+\frac{1}{4}\|w_{t}\|_{V_{2\alpha}}^{2}+\frac{k\varepsilon}{4}\|w\|_{V_{2+\sigma_{2}}}^{2}+\frac{\beta\varepsilon}{2}\|\eta\|_{\mathcal{M}_{2+\sigma_{2}}}^{2}\nonumber\\
			\le &C\|w\|^{2}+C\|(A^{\alpha}-A^{\alpha_{0}})u_{t}^{\alpha_{0},\beta_{0}}\|^{2}+C|\beta-\beta_{0}|^{2}.
		\end{align}
		Applying the Gronwall inequality to \eqref{6.9} yields
		\begin{align}\label{6.13}
			F_{1}(t)\le C\int_{0}^{t}e^{c(t-\tau)}\|(A^{\alpha}-A^{\alpha_{0}})u_{t}^{\alpha_{0},\beta_{0}}\|^{2}d\tau+Ce^{ct}|\beta-\beta_{0}|^{2}
		\end{align}
		for some constant $c>0$. Since $$(u_{0}^{\alpha_{0},\beta_{0}}, u_{1}^{\alpha_{0},\beta_{0}}, \eta_{0}^{\alpha_{0},\beta_{0}})\in \mathcal{B}\subset \mathcal{Z},$$ we define $$u_{t}^{\alpha_{0},\beta_{0}}(t)=\sum_{j=1}^{\infty}a_{j}(t)\omega_{j},$$ where $A\omega_{j}=\lambda_{j}\omega_{j}, j=1,2,3,...$, and from \eqref{6.5}, for any $\varepsilon>0$ there exists a positive integer $N>0$ such that
		\begin{align*}
			\sum_{j>N}\left| \left(\lambda_{j}^{\alpha}-\lambda_{j}^{\alpha_{0}}\right)a_{j}(t)\right|^{2}\le \frac{\varepsilon}{2},
		\end{align*}
		and there exists a $0<\widehat{\delta}<\delta_{1}$ such that
		\begin{align*}
			\sum_{j=1}^{N}\left| \left(\lambda_{j}^{\alpha}-\lambda_{j}^{\alpha_{0}}\right)a_{j}(t)\right|^{2}\le \frac{\varepsilon}{2}, \mbox{~as~}0<|\alpha-\alpha_{0}|<\widehat{\delta}.
		\end{align*}
		Therefore, we have
		\begin{align}\label{6.27}
			\lim_{\alpha\to\alpha_{0}} \|(A^{\alpha}-A^{\alpha_{0}})u_{t}^{\alpha_{0},\beta_{0}}\|^{2} =0.
		\end{align}
		By \eqref{6.5}  and the Lebesgue dominated convergence theorem, we have
		\begin{align}\label{6.12}
			\lim_{\alpha\to\alpha_{0}} \int_{0}^{t}e^{c(t-\tau)}\|(A^{\alpha}-A^{\alpha_{0}})u_{t}^{\alpha_{0},\beta_{0}}\|^{2}d\tau=0.
		\end{align}
		Combining \eqref{6.11} and \eqref{6.12}, and by \eqref{6.6} and \eqref{6.13}, we infer
		\begin{align}\label{6.31}
			\lim_{(\alpha,\beta)\to(\alpha_{0},\beta_{0})}\|(w,w_{t}, \zeta)\|_{\mathcal{H}_{\alpha}}^{2}=0.
		\end{align}
		Furthermore, by the interpolation theorem, from \eqref{6.31}, we get
		
	\begin{align}\label{6.24}
			\|w_{t}\|_{V_{4-\epsilon}}\le \|w_{t}\|_{V_{4-\delta_{0}}}^{\frac{4-\alpha-\epsilon}{4-\alpha-\delta_{0}}}\|w_{t}\|_{V_{\alpha}}^{\frac{\epsilon-\delta_{0}}{4-\alpha-\delta_{0}}}\le C(\mathcal{B})\|w_{t}\|_{V_{\alpha}}^{\frac{\epsilon-\delta_{0}}{4-\alpha-\delta_{0}}}\to 0, \mbox{~as~}(\alpha,\beta)\to(\alpha_{0},\beta_{0}),
		\end{align}
		and
		\begin{align}\label{6.24-}
		\|(w,w_{t}, \zeta)\|_{\mathcal{H}_{2-\epsilon}}\le C\|(w,w_{t}, \zeta)\|_{\mathcal{H}_{2}}^{\frac{2-\epsilon}{2}}\|(w,w_{t}, \zeta)\|_{\mathcal{H}}^{\frac{\epsilon}{2}}\to 0, \mbox{~as~}(\alpha,\beta)\to(\alpha_{0},\beta_{0}),
		\end{align}
where $\epsilon$ is defined in \eqref{mu}.	Therefore, combining \eqref{6.24} and \eqref{6.24-}, we obtain
	\begin{align}\label{key0}
	\lim_{(\alpha,\beta)\to(\alpha_{0},\beta_{0})}	\|S^{\alpha,\beta}(t)\xi-S^{\alpha_{0},\beta_{0}}(t)\xi\|_{\mathcal{Z}_{\epsilon}}=0.
	\end{align}
We note that $\mathcal{B}_{R}$ is a complete metric space endowed with the norm derived from $\mathcal{Z}_{\epsilon}$, given that it is a closed subset of $\mathcal{Z}_{\epsilon}$. Consequently, the strong $(\mathcal{H},\mathcal{Z}_{\epsilon})$-global attractor $\mathcal{A}_{\alpha,\beta}$ emerges as the global attractor of the dynamical system $\left(\mathcal{B}_{R}, S^{\alpha,\beta}(t)\right)$.
Furthermore, by invoking Lemma \ref{lemma6.1} , we arrive at \eqref{3.3-}.
	
	\textbf{(iii)}	In particular, when $\beta \in [0,1]$ is fixed and $\alpha_{0}\in\lbrack0,2)$, the third term on the right-hand side of \eqref{6.3} vanishes. For brevity, we denote
$$A_{\alpha} = A_{\alpha,\beta}, \quad u^{\alpha} = u^{\alpha,\beta}, \quad u_{t}^{\alpha} = u_{t}^{\alpha,\beta}, \quad S^{\alpha}(t) = S^{\alpha,\beta}(t).$$ Taking $\sigma_{1} = \sigma_{2} = 2$, we obtain
		\begin{equation}\label{7.1}
			F_{2}(t) = F_{2,2}(t) \sim \|(w, w_{t}, \zeta)\|_{\mathcal{H}_{2}}^{2}.
		\end{equation}
		Consequently, we have the following estimate:
		\begin{align}\label{6.7-}
			&-\int_{\Omega}\left(f(u^{\alpha})-f(u^{\alpha_{0}})\right)\left(A^{2}w_{t}+\varepsilon A^{2}w\right)dx\nonumber\\
			=&-\int_{\Omega}\left(f^{\prime}(u^{\alpha})A^{\frac{1}{2}}u^{\alpha}-f^{\prime}(u^{\alpha_{0}})A^{\frac{1}{2}}u^{\alpha_{0}}\right)A^{\frac{3}{2}}w_{t}dx\nonumber\\
			&-\varepsilon\int_{\Omega}\left(f(u^{\alpha})-f(u^{\alpha_{0}})\right)A^{2}wdx\nonumber\\
			\le& C\|A^{\frac{3}{2}}w_{t}\|_{L^{\frac{2n}{n-2\left(1-\epsilon\right)}}}\left(\|A^{\frac{1}{2}}u^{\alpha}\|_{\frac{2n}{n-6}}+\|A^{\frac{1}{2}}u^{\alpha_{0}}\|_{\frac{2n}{n-6}}\right)\nonumber\\
			&\times\left(1+\|u^{\alpha}\|_{L^{\frac{(p-1)n}{4-\epsilon}}}^{p-1}+\|u^{\alpha_{0}}\|_{L^{\frac{(p-1)n}{4-\epsilon}}}^{p-1}\right)\nonumber\\
			&+C\varepsilon\|w\|_{L^{\frac{2n}{n-2(4-\varepsilon_{0})}}}\|w\|_{V_{4}}\left(1+\|u^{\alpha}\|_{L^{\frac{(p-1)n}{4-\varepsilon_{0}}}}+\|u^{\alpha_{0}}\|_{L^{\frac{(p-1)n}{4-\varepsilon_{0}}}}\right)\nonumber\\
			\le&\frac{k\varepsilon}{4}\|w\|_{V_{4}}^{2}+C\left(\|w\|^{2}+\|w_{t}\|_{V_{4-\epsilon}}\right).
		\end{align}
		Since $\alpha, \alpha_{0}\in (a,b) $ and $u_{t}^{\alpha_{0}}\in V_{4-\delta_{0}}$, we get
		\begin{align}\label{6.5-}
			& -\int_{\Omega}(A^{\alpha}-A^{\alpha_{0}})u_{t}^{\alpha_{0}}\left(A^{2}w_{t}+\varepsilon A^{2}w\right)dx\nonumber\\
			=&-\int_{\Omega}(A^{\alpha+\frac{\epsilon}{2}}-A^{\alpha_{0}+\frac{\epsilon}{2}})u_{t}^{\alpha_{0}}\left(A^{2-\frac{\epsilon}{2}}w_{t}+\varepsilon A^{2-\frac{\epsilon}{2}}w\right)dx\nonumber\\
			\le& \frac{k\varepsilon}{4}\|w\|_{V_{4}}^{2}+\frac{1}{4}\|w_{t}\|_{V_{4-\epsilon}}^{2}+C(\varepsilon)\left\|\left(A^{\alpha
				+\frac{\epsilon}{2}}-A^{\alpha_{0}+\frac{\epsilon}{2}}\right)u_{t}^{\alpha_{0}}\right\|^{2}<\infty.
		\end{align}
		Substituting estimates  \eqref{6.7-} and \eqref{6.5-} into \eqref{6.3},  we deduce that
		\begin{align*}
			&\frac{d}{dt}F_{2}(t)+\|w_{t}\|_{V_{2+\alpha}}^{2}+\frac{k\varepsilon}{2}\|w\|_{V_{4}}^{2}+\frac{\beta\varepsilon}{2}\|\zeta\|_{\mathcal{M}_{4}}^{2}\nonumber\\
			\le& C\left(\|w\|^{2}+\|w_{t}\|_{V_{4-\epsilon}}^{2}+\|w_{t}\|_{V_{4-\epsilon}}+\left\|\left(A^{\alpha
				+\frac{\epsilon}{2}}-A^{\alpha_{0}+\frac{\epsilon}{2}}\right)u_{t}^{\alpha_{0}}\right\|^{2}\right).
		\end{align*}
		By Gronwall's inequality and \eqref{7.1} , there exists a constant $\gamma_{1}>0$ such that
		\begin{align}\label{7.2-}
			\|(w,w_{t},\zeta)\|_{\mathcal{H}_{2}}^{2}\le& C\int_{0}^{t}e^{-\gamma_{1}(t-s)}\left(\|w\|^{2}+\|w_{t}\|_{V_{4-\epsilon}}^{2}+\|w_{t}\|_{V_{4-\epsilon}}\right)ds\nonumber\\
			&+C\int_{0}^{t}e^{-\gamma_{1}(t-s)}\left\|\left(A^{\alpha
				+\frac{\epsilon}{2}}-A^{\alpha_{0}+\frac{\epsilon}{2}}\right)u_{t}^{\alpha_{0}}\right\|^{2}ds.
		\end{align}
		Similarly as in the proof of \eqref{6.27}, we also have
		\begin{align}\label{7.3-}
			\lim_{\alpha\to\alpha_{0}} \left\|\left(A^{\alpha
				+\frac{\epsilon}{2}}-A^{\alpha_{0}+\frac{\epsilon}{2}}\right)u_{t}^{\alpha_{0}}\right\|^{2} =0.
		\end{align}
		By employing the inequalities \eqref{6.31}, \eqref{6.24}, and \eqref{7.3-}, along with the Lebesgue dominated convergence theorem, it follows from the estimate \eqref{7.2-}  that
		\begin{align}\label{r}
			\lim_{\alpha\to\alpha_{0}}\|S^{\alpha,\beta}(t)\xi-S^{\alpha_{0},\beta}(t)\xi\|_{V_{4-\epsilon} \times V_{4} \times \mathcal{M}_{4}}=0.
		\end{align}
 Then, from Lemma \ref{lemma6.1}, we obtain \eqref{3.4}, which completes the proof.	
\end{proof}

\begin{remark}{\rm
	From the proof of Theorem \ref{them5}, it follows that, for each fixed $\beta \in [0,1]$, the attractor is upper semicontinuous with respect to $\alpha\in\lbrack0,2)$ in the topology of $V_{4} \times V_{4-\epsilon} \times \mathcal{M}_{4}$. However, the upper semicontinuity with respect to $\beta$ can only be established in the weaker $\mathcal{Z}_{\varepsilon}$-topology. The reason is that the term
	\begin{align*}
		(\beta - \beta_{0}) \int_{0}^{\infty} g(s) \int_{\Omega} A^{2} \eta^{\alpha_{0},\beta_{0}} A^{2} w_{t} \, dx \, ds
	\end{align*}
	is difficult to control when the multiplier $A^{2}w_{t}$ is applied.

We also point out that the upper semicontinuity of the attractor
family with respect to $\alpha$ at $\alpha_0=2$ would require taking
$\epsilon=0$ in \eqref{6.5-} for fixed $\beta\in[0,1]$. This produces the term $
	\|w_t\|_{V_4}^{2}.$
	However, for \(\alpha<2\), the dissipation term only controls
$	\|w_t\|_{V_{2+\alpha}}^{2},$
	which is not strong enough to absorb $\|w_t\|_{V_4}^{2}$. Therefore,
	the present argument does not yield the upper semicontinuity at
	$\alpha_0=2$ in the stronger topology
$V_4\times V_{4-\varepsilon}\times\mathcal M_4 .$}
\end{remark}

\section{Continuity of strong exponential attractors}

 The purpose of this section is to demonstrate that the dynamical system
$(\mathcal{H},S^{\alpha,\beta}(t))$ possesses a family of strong $(\mathcal{H},\mathcal{Z}_{\epsilon})$-exponential attractors $\mathcal{E}_{\exp}^{\alpha,\beta}$, which depend continuously on
$(\alpha,\beta)\in\Xi$.

To achieve this goal, we rely on the following abstract criterion established in \cite{Yang-2018}.
\begin{lemma}\label{lem Yang}
 Let  $M$ be a bounded closed set of the Banach space $X$. Assume that for every $\chi\in I$ (index set) there exists a family of discrete semigroup  $\{V_{\chi}^{k}\}_{k\in \mathbb{N}}$ acting on $M$, and  possessing the following properties:
	\begin{enumerate}
		\item[{\rm(i)}] $V_{\chi}^{k}$ is uniformly (w.r.t. $\chi\in I$) Lipschitz continuous on $M$, i.e., there exists a positive constant $L$ such that
		\begin{align*}
		\sup_{\chi\in I}	\|V_{\chi}^{k}v_{1}-V_{\chi}^{k}v_{2}\|_{X}\le L\|v_{1}-v_{2}\|_{X}, ~\forall v_{1}, v_{2}\in M.
		\end{align*}
		\item[{\rm(ii)}]there exist a Banach space $Z$ and a compact seminorms $n_{Z}(\cdot)$ on $Z$, and there exists a mapping $K_{\chi}: M\to Z$ for each $\chi\in I$ such that
		\begin{align*}
			&\sup_{\chi\in I}\|K_{\chi}v_{1}-K_{\chi}v_{2}\|_{Z}\le L_{1}\|v_{1}-v_{2}\|_{X}, \\
			& \|V_{\chi}^{k}v_{1}-V_{\chi}^{k}v_{2}\|_{X}\le \theta\|v_{1}-v_{2}\|_{X}+n_{Z}(K_{\chi}v_{1}-K_{\chi}v_{2}), ~\forall v_{1},v_{2}\in M,
		\end{align*}
		where $0<\theta<1, L_{1}>0$ are constants independent of $\chi$.
	\end{enumerate}
		Then, the discrete dynamical system $(V_{\chi}^{k},M)$ admits an exponential attractor $\Sigma^{\chi}$ for each $\chi\in I$. Moreover, for any given $\chi_{0}\in I$, if $\chi\in I$ satisfies
		\begin{align*}
			\Gamma(\chi,\chi_{0}):=\sup_{x\in M}\|V_{\chi}^{k}x-V_{\chi_{0}}^{k}x\|_{X}< 1,
		\end{align*}
		then,
		\begin{align*}
			\mbox{dist}_{X}^{symm}\left(\Sigma^{\chi}, \Sigma^{\chi_{0}}\right)\le C\left(\Gamma(\chi,\chi_{0})\right)^{\lambda}
		\end{align*}
		where $C>0$ and $0<\lambda<1$ are constants independent of $\chi$.
	
\end{lemma}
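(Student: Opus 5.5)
The plan is to follow the Efendiev--Miranville--Zelik construction of exponential attractors for discrete maps satisfying a squeezing/smoothing property. We write $V_\chi:=V_\chi^1$, so that $V_\chi^k$ is its $k$-th iterate. The construction is run for two parameters $\chi,\chi_0$ simultaneously, on the \emph{same} preimage points, so that the continuity estimate falls out of the construction itself. Throughout, all constants are tracked so that they depend only on $\theta$, $L$, $L_1$, $n_Z$ and $R:=\operatorname{diam}_X M$, and never on $\chi$.

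\textbf{Step 1 (uniform covering lemma).} Fix $\mu>0$ with $\theta+2\mu L_1\le \tfrac12$. Since $n_Z$ is a compact seminorm, the unit ball of $Z$ is covered by finitely many, say $N_0$, sets of $n_Z$-diameter $\le\mu$. Take a ball $B=B_X(x,r)\cap M$ and two parameters $\chi,\chi_0$. By (ii), $K_\chi B$ and $K_{\chi_0}B$ lie in $Z$-balls of radius $L_1 r$. Intersecting the two partitions pulled back to $B$ gives at most $N:=N_0^2$ cells $B_1,\dots,B_N$ on which both $n_Z(K_\chi v_1-K_\chi v_2)$ and $n_Z(K_{\chi_0}v_1-K_{\chi_0}v_2)$ are $\le 2\mu L_1 r$. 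Choose one point $x_i$ in each nonempty cell. The squeezing inequality then gives
\[
V_\chi B_i\subset B_X(V_\chi x_i,r/2),\qquad V_{\chi_0}B_i\subset B_X(V_{\chi_0}x_i,r/2).
\]
The essential feature is that the same preimage points $x_i$ serve both parameters, and that $N$ is independent of $\chi$.

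\textbf{Step 2 (construction and properties of $\Sigma^\chi$).} Apply Step 1 inductively, starting from a single ball of radius $R$ containing $M$. This produces preimage sets $P_k\subset M$ with $\#P_k\le N^k$, such that the centres $E_k^\chi:=V_\chi^k P_k$ satisfy
\[
V_\chi^k M\subset \mathcal{O}_{R2^{-k}}(E_k^\chi)
\]
for every $\chi$ simultaneously, where $\mathcal{O}_\rho(\cdot)$ denotes the $\rho$-neighbourhood in $X$. Next make the sets forward invariant by setting $\tilde E_1^\chi=E_1^\chi$ and $\tilde E_{k+1}^\chi=E_{k+1}^\chi\cup V_\chi\tilde E_k^\chi$, and define $\Sigma^\chi:=\bigl[\bigcup_k\tilde E_k^\chi\bigr]_X$. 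The properties then follow in turn.
\begin{enumerate}
\item[(a)] Forward invariance: this holds by construction, using that $V_\chi$ is continuous (by (i)).
\item[(b)] Exponential attraction: we have $\operatorname{dist}_X(V_\chi^kM,\Sigma^\chi)\le R2^{-k}$.
\item[(c)] Finite fractal dimension: $\#\tilde E_k^\chi\le kN^k$. Moreover $\bigcup_{j\ge k}\tilde E_j^\chi$ lies in an $R2^{-k}$-neighbourhood of $\bigcup_{j\le k}\tilde E_j^\chi$ (the tail is dominated via the Lipschitz constant $L$). Together these give $\dim_f\Sigma^\chi\le C\log_2(NL)$, uniformly in $\chi$.
\end{enumerate}

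\textbf{Step 3 (Hölder continuity; main obstacle).} Compare the two constructions level by level. Because the preimages $P_k$ are shared, an induction using (i) and the definition of $\Gamma=\Gamma(\chi,\chi_0)$ gives
\[
\|V_\chi^jy-V_{\chi_0}^jy\|_X\le \Gamma(1+L+\dots+L^{j-1})\le C L^j\Gamma \quad\text{for } y\in M.
\]
Hence each point of $\tilde E_k^\chi$ lies within $CL^k\Gamma$ of a point of $\tilde E_k^{\chi_0}$. Now split $\Sigma^\chi$ at a level $k_0$. Points from levels $k\le k_0$ are within $CL^{k_0}\Gamma$ of $\Sigma^{\chi_0}$. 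Points from levels $k>k_0$ lie in $V_\chi^{k_0}M$, so they are within $R2^{-k_0}$ of $E_{k_0}^\chi$, and therefore within $R2^{-k_0}+CL^{k_0}\Gamma$ of $\Sigma^{\chi_0}$. The argument is symmetric in $\chi$ and $\chi_0$. Choosing $k_0$ so that $L^{k_0}\Gamma\approx 2^{-k_0}$, which is possible since $\Gamma<1$, yields
\[
\operatorname{dist}^{symm}_X(\Sigma^\chi,\Sigma^{\chi_0})\le C\Gamma^\lambda,\qquad \lambda=\frac{\ln 2}{\ln 2+\ln L}\in(0,1).
\]

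The delicate point is precisely this synchronisation: the coverings must be chosen with common preimages for both parameters, which is why the joint partition with $N_0^2$ cells is used in Step 1. A second subtlety is handling the tails of the invariance-completed sets $\tilde E_k$ (their images $V^j_\chi\tilde E_k$ for large $j$). The error $L^j\Gamma$ grows with $j$, so it must be cut off at level $k_0$ and the remainder bounded by the $2^{-k_0}$ covering radius instead, rather than tracked for all $j$.
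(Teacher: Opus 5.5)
The paper does not prove this lemma; it quotes it from \cite{Yang-2018}. Your argument therefore has to stand on its own, and as written it has a genuine gap in the synchronisation.

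Step 1 couples exactly two parameters, through a joint partition with $N_0^2$ cells. Step 2 then asserts that one sequence of preimage sets $P_k$ works ``for every $\chi$ simultaneously''. Nothing in the hypotheses relates $K_\chi$ for different $\chi$, and in general no finite $P_k$ can do this. For example, on the closed unit ball $M$ of $\ell^2$ let $V_j$ be the orthogonal projection onto $\mathbb{R}e_j$, with $Z=\mathbb{R}$, $K_jx=x_j$, $n_Z=|\cdot|$ and $L=L_1=1$. For any finite $P\subset M$ one has $\max_{p\in P}|p_j|\to0$, so $V_j^kP$ is not a $\tfrac12$-net of $V_j^kM=[-1,1]e_j$ for large $j$.

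Hence $P_k=P_k(\chi,\chi_0)$, and the set you call $\Sigma^{\chi_0}$ changes whenever $\chi$ does. What you have shown is only that, for each pair, there exist two exponential attractors that are $C\Gamma^\lambda$-close. The lemma needs $\Sigma^{\chi_0}$ chosen independently of $\chi$, and so does its use in Theorem \ref{them4.2}, where $\mathcal{E}_{\exp}^{\alpha_0,\beta_0}$ must stay fixed while $(\alpha,\beta)\to(\alpha_0,\beta_0)$.

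The missing idea is an asymmetric construction. Build $\Sigma^{\chi_0}$ once, from refining partitions of $M$ adapted to $K_{\chi_0}$ alone. For $\chi$, reuse exactly those cells and representatives at all levels $k\le k_0$, and refine with $K_\chi$ only for $k>k_0$. This is legitimate because $\operatorname{diam}V_\chi^kC\le\operatorname{diam}V_{\chi_0}^kC+2CL^k\Gamma$ and $L^k\Gamma\le q^k$ for $k\le k_0$, where $q<1$ is the per-level contraction factor discussed below. Your Step 3 then applies verbatim.

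Even so, the perturbed attractors depend on the chosen base point. A single family that works for every $\chi_0$ at once, as the statement literally reads, would need a further argument that neither your proposal nor this patch supplies.

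There is also a quantitative slip in Step 1 that affects the iteration. Two points of $B_X(x,r)$ can be $2r$ apart, so squeezing only gives $\|V_\chi v-V_\chi x_i\|_X\le 2\theta r+2\mu L_1 r$. This is not $\le r/2$ under your condition $\theta+2\mu L_1\le\tfrac12$, and for $\theta\ge\tfrac12$ the radius does not shrink at all, although the lemma allows any $\theta<1$. Moreover, a ball-based Step 1 cannot be iterated with shared representatives, because $V_\chi^kC$ and $V_{\chi_0}^kC$ lie in different balls.

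Both problems disappear if you track diameters of images of preimage cells. Suppose $C\subset M$ and $\operatorname{diam}V_\chi^kC\le d$. Then $K_\chi V_\chi^kC$ lies in a $Z$-ball of radius $L_1d$, so $C$ splits into at most $N_0$ pieces $C_j$ on which $K_\chi V_\chi^k$ has $n_Z$-oscillation at most $\mu L_1 d$. Consequently $\operatorname{diam}V_\chi^{k+1}C_j\le(\theta+\mu L_1)d=:qd$, with $q<1$ for $\mu$ small.

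With $2^{-k}$ replaced by $q^k$, the rest of Steps 2 and 3 is fine. This covers forward invariance, exponential attraction, the bound $\dim_f\Sigma^\chi\le\ln N/\ln(1/q)$, and the splitting at $k_0$, which now gives $\lambda=\ln(1/q)/\ln(L/q)$.
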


We next present the quasi-stability estimate for the system $(\mathcal{H},S^{\alpha,\beta}(t))$ .
	\begin{theorem}\label{thmm}
Assume that Assumption~\ref{ass1} holds, and that
\begin{equation}\label{dim}
	\left\{
	\begin{array}{ll}
		1\le n\le
	\dfrac{8\min\{1+\alpha,2\}}
	{\min\{\alpha,2-\alpha\}}+4,~&\mbox{if}~ 0<\alpha<2~\mbox{and}~0<\beta\le1,\\
	1\le n<+\infty,\quad\quad\quad\quad\quad\quad\quad&\mbox{if}~ \alpha=0,2~\mbox{or}~\beta=0.
	\end{array}
	\right.
\end{equation}
Then, for any  full trajectory $$\left\{S^{\alpha,\beta}(t)z^{i}=(u^{i},u_{t}^{i},\eta^{i})| z^{i}=(u_{0}^{i}, u_{1}^{i}, \eta_{0}^{i})\in \mathcal{B}, i=1,2,t \in \mathbb{R}\right\},$$ we have the estimates
	\begin{align}\label{7.2}
		\|S^{\alpha,\beta}(t)z^{1}-S^{\alpha,\beta}(t)z^{2}\|_{\mathcal{H}_{\sigma}}^{2}\le C_{0}e^{\gamma_{2}t}\|z^{1}-z^{2}\|_{\mathcal{H}_{\sigma}}^{2}
	\end{align}
	and the quasi-stability inequality
	\begin{align}\label{7.3}
		\|S^{\alpha,\beta}(t)z^{1}-S^{\alpha,\beta}(t)z^{2}\|_{\mathcal{H}_{\sigma}}^{2}\le C_{0}e^{-\gamma_{3}t}\|z^{1}-z^{2}\|_{\mathcal{H}_{\sigma}}^{2}+C_{0}\left(1-e^{-\gamma_{3}t}\right)\sup_{s\in[0,t]}\|u^{1}-u^{2}\|^{2},
	\end{align}
for some constants $\gamma_{2}, \gamma_{3}, C_{0} > 0$, and for any $0\le\sigma\le \alpha$.	Then, the system $(\mathcal{H},S^{\alpha,\beta}(t))$ has an exponential attractor $E_{exp}^{\alpha,\beta}$ whose dimension is finite on $\mathcal{H}$.
\end{theorem}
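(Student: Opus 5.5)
We will argue by energy estimates on the difference system \eqref{1.2--}, with the same multiplier structure as in the proof of Theorem \ref{them5}, and then invoke the standard quasi-stability machinery. Let $z=u^1-u^2$ and $\zeta=\eta^1-\eta^2$. We test \eqref{1.2--} with $A^{\sigma}z_t+\varepsilon A^{\sigma_2}z$, where $0\le\sigma\le\alpha$ and $\sigma_2=\sigma$. This produces the analogue of \eqref{6.3} for the functional $F_{\sigma,\sigma}(t)$ in \eqref{6.6}, without the perturbation terms involving $\alpha-\alpha_0$ and $\beta-\beta_0$. The memory part is handled by the identity
\begin{align*}
\langle -\zeta_s,\zeta\rangle_{\mathcal M_{2+\sigma}}=\frac12\int_0^\infty g'(s)\|\zeta\|_{V_{2+\sigma}}^2ds\le0,
\end{align*}
together with the auxiliary term built on $G$ and \eqref{g1}. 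For $\varepsilon$ small, this gives $F_{\sigma,\sigma}\sim\|(z,z_t,\zeta)\|_{\mathcal H_\sigma}^2$ and the inequality
\begin{align*}
\frac{d}{dt}F_{\sigma,\sigma}+\gamma\|(z,z_t,\zeta)\|_{\mathcal H_\sigma}^2\le -\big(f(u^1)-f(u^2),A^{\sigma}z_t+\varepsilon A^{\sigma}z\big).
\end{align*}
Here we use $\|z_t\|_{V_{\sigma+\alpha}}^2\ge c\|z_t\|_{V_\sigma}^2$, and we need $\sigma\le\alpha$ so that the term $\varepsilon\|z_t\|_{V_\sigma}^2$ is absorbed by the dissipation.

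\textbf{The nonlinear term is the main obstacle.} Since $\mathcal B$ is bounded in $\mathcal Z$ by Lemma \ref{them3.8}, we have $u^i\in V_4$ uniformly, so $u^i\in L^\infty$ for $n\le7$. In higher dimensions $u^i$ lies in $L^q$ with $q=\frac{2n}{n-8}$. We estimate
\begin{align*}
|(f(u^1)-f(u^2),A^{\sigma}z_t)|\le C\|z\|_{V_{\sigma+2-\kappa}}\|z_t\|_{V_{\sigma+\alpha}}
\end{align*}
for some $\kappa>0$. This works by Hölder's inequality with the weight $1+|u^1|^{p-1}+|u^2|^{p-1}$, placing $A^{\sigma}z_t$ in $V_{\alpha-\sigma}$ by duality, and the exponent bookkeeping is exactly where the dimension restriction \eqref{dim} enters. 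When $\alpha\in\{0,2\}$ or $\beta=0$ the restriction is not needed: either $\sigma=0$ makes the estimate elementary through the growth condition \eqref{assp}, or, for $\alpha=2$, the full $V_4$ dissipation is available. We then interpolate $\|z\|_{V_{\sigma+2-\kappa}}\le\delta\|z\|_{V_{\sigma+2}}+C_\delta\|z\|$ and use Young's inequality, so the right-hand side becomes $\frac\gamma2\|(z,z_t,\zeta)\|_{\mathcal H_\sigma}^2+C\|z\|^2$.

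Two Gronwall arguments then finish the estimates. Dropping the dissipative term and applying Gronwall gives \eqref{7.2} with some $\gamma_2>0$. Keeping it, Gronwall gives
\begin{align*}
F(t)\le e^{-\gamma_3t}F(0)+C\int_0^te^{-\gamma_3(t-s)}\|z(s)\|^2ds,
\end{align*}
and bounding the integral by $\gamma_3^{-1}(1-e^{-\gamma_3t})\sup_{[0,t]}\|z\|^2$ yields \eqref{7.3}.

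For the exponential attractor we take $\sigma=0$, fix $T$ with $C_0e^{-\gamma_3T}<1/4$, and set $V=S^{\alpha,\beta}(T)$ on $\mathcal B$. The map $K:\xi\mapsto u(\cdot)|_{[0,T]}$ goes into $Z=\{u\in L^\infty(0,T;V_2):u_t\in L^\infty(0,T;V_0)\}$. The seminorm $n_Z(u)=\sup_{[0,T]}\|u\|$ is compact on $Z$ by the Aubin--Lions--Simon lemma, and $K$ is Lipschitz by \eqref{7.2}. Lemma \ref{lem Yang} with a single index then gives a discrete exponential attractor $\Sigma$ of finite fractal dimension in $\mathcal H$. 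The continuous-time attractor is $E_{exp}^{\alpha,\beta}=\bigcup_{t\in[0,T]}S^{\alpha,\beta}(t)\Sigma$. It remains finite dimensional because $(t,\xi)\mapsto S^{\alpha,\beta}(t)\xi$ is Lipschitz in $\xi$ by \eqref{7.2} and Hölder in $t$, since $u_t$ is bounded in $V_{4-\delta_0}$ on $\mathcal B$.
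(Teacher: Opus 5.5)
For $\beta>0$ and $0<\alpha<2$ your argument is the paper's: the same functional with $\sigma_2=\sigma$, the $G$-weighted history term, and an exponent count that produces \eqref{dim}. The attractor construction via Lemma \ref{lem Yang} is the one carried out in the proof of Theorem \ref{them4.2}. Your $Z$ with $u_t\in L^\infty(0,T;V_0)$ and the Aubin--Lions--Simon lemma is, if anything, a cleaner way to get compactness of $n_Z$.

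The gap is the memoryless case $\beta=0$, $0<\alpha<2$, where the statement asserts that no dimensional restriction is needed. Neither of your justifications covers it: $\alpha\neq 2$, and $\sigma=0$ is elementary only when $\alpha=0$. With your multiplier $A^{\sigma}z_t+\varepsilon A^{\sigma}z$, putting $A^{\sigma}z_t$ in $V_{\alpha-\sigma}$ and $z$ in $V_{\sigma+2-\kappa}$ forces the weight $1+|u^1|^{p-1}+|u^2|^{p-1}$ into $L^{n/(2+\alpha-\kappa)}$. That is, you need $u^i\in L^{(p-1)n/(2+\alpha-\kappa)}$, and this requirement does not depend on $\sigma$.
\begin{itemize}
\item From $u^i\in V_2$ it only permits $p<\frac{n+2\alpha}{n-4}$, whereas \eqref{assp} allows $p$ up to $\frac{n+4\alpha}{n-4}$.
\item From $u^i\in V_4$ (for $n>8$) it permits $p<\frac{n-4+2\alpha}{n-8}$, and the inequality $p^*\le\frac{n-4+2\alpha}{n-8}$ is precisely \eqref{dim}.
\end{itemize}
Hence, for $\beta=0$, $0<\alpha<2$, $n$ above the bound in \eqref{dim} and $p\in[\frac{n-4+2\alpha}{n-8},p^*)$, your estimate of the nonlinear term fails for every $\sigma\in[0,\alpha]$. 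This includes $\sigma=0$, the case on which you build the attractor.

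What is missing is the paper's shifted multiplier for $\beta=0$. Test with $A^{\sigma}\phi_t+\varepsilon A^{\tilde\sigma}\phi$, where $\tilde\sigma=\min\{\alpha,2-\alpha\}+\sigma$ (the functional $H_1$).
\begin{itemize}
\item Because $2\tilde\sigma-\sigma\le\sigma+2$, $\alpha+\tilde\sigma\le\sigma+2$ and $\tilde\sigma\le\alpha+\sigma$, $H_1$ stays equivalent to $\|\cdot\|_{\mathcal H_\sigma}^2$ and $\varepsilon\|\phi_t\|_{V_{\tilde\sigma}}^2$ is still absorbed.
\item The dissipation, however, becomes $k\varepsilon\|\phi\|_{V_{2+\tilde\sigma}}^2$.
\item Placing $\phi$ in $V_{2+\tilde\sigma-\varepsilon_0}$ relaxes the requirement to $u^i\in L^{(p-1)n/(2+2\alpha-\varepsilon_0)}$ for $\alpha\le 1$, and to $u^i\in L^{(p-1)n/(4-\varepsilon_0)}$ for $\alpha\ge1$.
\item This holds for every $p<p^*$ in every dimension, already from $u^i\in V_2$.
\end{itemize}
The paper uses this shift only without memory. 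For $\beta>0$ the history term keeps the $\varepsilon$-multiplier at level $\sigma$, and that is where \eqref{dim} comes from.

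Separately, and less seriously: H\"older continuity of $t\mapsto S^{\alpha,\beta}(t)\xi$ in $\mathcal H$ needs control of all of $(u_t,u_{tt},\eta_t)$. Boundedness of $u_t$ in $V_{4-\delta_0}$ handles only the first component. In particular, $\eta_t=-\eta_s+u_t$ is not controlled by the $\mathcal Z$-bound.
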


\begin{proof}
	For any two trajectories
	$$\left\{\xi_{u^{i}}|\xi_{u^{i}}=S^{\alpha,\beta}(t)z^{i}=(u^{i},u_{t}^{i},\eta^{i}),~z^{i}\in \mathcal{B},~i=1,2\right\},$$
	set $\xi_{\phi}=(\phi,\phi_{t},\psi)=\xi_{u^{1}}-\xi_{u^{2}}$, which satisfies the following problem
	\begin{equation*}
		\left\{
		\begin{array}{l}
			\phi_{tt}+kA^{2}\phi+A^{\alpha}\phi_{t}+\beta\displaystyle\int_{0}^{\infty}g(s)A^{2}  \psi ds+f(u^{1})-f(u^{2})=0,\\
			\psi_{t}=-\psi_{s}+\phi_{t},\\
			(\phi,\phi_{t}, \psi)|_{t=0}=z^{1}-z^{2}.
		\end{array}
		\right.
	\end{equation*}

When $0<\beta \le 1$, for $1\le p<p^{**}<\frac{n+2\alpha}{(n-4)^{+}}$ we introduce the following auxiliary functional
\begin{align*}
H(t)=\frac{1}{2}\|(\phi,\phi_{t},\psi)\|_{\mathcal{H}_{\sigma}}^{2}+\varepsilon \int_{\Omega}\phi_{t}A^{\sigma}\phi dx+\frac{\varepsilon}{2}\|\phi\|_{V_{\alpha+\sigma}}^{2}+\frac{\beta\varepsilon}{2}\int_{0}^{\infty}G(s)\|\psi-\phi\|_{V_{2+\sigma}}^{2}ds,
\end{align*}
with $0\le \sigma\le \alpha$. Differentiating with respect to $t$ yields, for any $0<\varepsilon_{0}\ll 1$,
	\begin{align}\label{ff}
&\frac{d}{dt}H(t)+\|\phi_{t}\|_{V_{\alpha+\sigma}}^{2}+k\varepsilon\|\phi\|_{V_{2+\sigma}}^{2}-\varepsilon\|\phi_{t}\|_{V_{\sigma}}^{2}+\frac{\beta\varepsilon}{2}\|\psi\|_{\mathcal{M}_{2+\sigma}}^{2}\nonumber\\
\le&-\int_{\Omega}(f(u^{1})-f(u^{2}))(A^{\sigma}\phi_{t}+\varepsilon A^{\sigma}\phi)dx\nonumber\\
	\le& C\|\phi\|_{L^{\frac{2n}{n-2(2+\sigma-\varepsilon_{0})}}}\left(\|A^{\sigma}\phi_{t}\|_{L^{\frac{2n}{n-2(\alpha-\sigma)}}}+\varepsilon\|A^{\sigma}\phi\|_{L^{\frac{2n}{n-2(2-\sigma)}}}\right)\nonumber\\
&\times \left(1+\|u^{1}\|_{L^{\frac{(p-1)n}{2+\alpha-\varepsilon_{0}}}}^{p-1}+\|u^{2}\|_{L^{\frac{(p-1)n}{2+\alpha-\varepsilon_{0}}}}^{p-1}+\|u^{1}\|_{L^{\frac{(p-1)n}{4-\varepsilon_{0}}}}^{p-1}+\|u^{2}\|_{L^{\frac{(p-1)n}{4-\varepsilon_{0}}}}^{p-1}\right)\nonumber\\
\le& \frac{1}{4}\|\phi_{t}\|_{V_{\alpha+\sigma}}^{2}+\frac{k\varepsilon}{2}\|\phi\|_{V_{2+\sigma}}^{2}+C(\varepsilon)\|\phi\|^{2}.
	\end{align}
	To justify the above estimate uniformly for \(0\le \sigma\le \alpha\)
	and for every \(p^{**}\le p<p^{*}\), we use the higher-order regularity $
	u^{1},u^{2}\in V_{4}.$ For sufficiently small $\varepsilon_0$, the following condition on the dimension $n>8$ must hold:
	\begin{align*}
		\frac{(p-1)n}{2 + \alpha} < \frac{2n}{n - 8} \quad \Longrightarrow \quad p < p^* \leq \frac{n - 4 + 2\alpha}{n - 8}.
	\end{align*}
		On the other hand, if \(1\le n\le 8\), then $
	V_{4}\hookrightarrow L^{\infty}(\Omega),$
	and the above restriction is automatically satisfied. Hence the estimate
	\eqref{ff} is valid under the dimensional condition \eqref{dim}. Then, choosing $\varepsilon$ sufficiently small,  we obtain
\begin{align*}
	C_{1}\|S^{\alpha,\beta}(t)z^{1}-S^{\alpha,\beta}(t)z^{2}\|_{\mathcal{H}_{\sigma}}^{2}\le H(t)\le C_{2} \|S^{\alpha,\beta}(t)z^{1}-S^{\alpha,\beta}(t)z^{2}\|_{\mathcal{H}_{\sigma}}^{2},
\end{align*}
and
	\begin{align*}
	\frac{d}{dt}H(t)+\frac{1}{2}\|\phi_{t}\|_{V_{\alpha+\sigma}}^{2}+\frac{k\varepsilon}{2}\|\phi\|_{V_{2+\sigma}}^{2}+\frac{\beta\varepsilon}{2}\|\psi\|_{\mathcal{M}_{2+\sigma}}^{2}\le C(\varepsilon)\|\phi\|^{2},
	\end{align*}
where, $C_{1},C_{2},C(\varepsilon)$ are positive constants.	By Gronwall's inequality, we then obtain estimates \eqref{7.2} and \eqref{7.3}.

If the memory term vanishes, i.e. $\beta=0$, the critical condition for the nonlinear term is given by \eqref{assp}. We construct the following auxiliary functional
	\begin{align*}
		H_{1}(t)=\frac{1}{2}(\|\phi_{t}\|_{V_{\sigma}}^{2}+k\|\phi\|_{V_{2+\sigma}}^{2})+\varepsilon \int_{\Omega}\phi_{t}A^{\widetilde{\sigma}}\phi dx+\frac{\varepsilon}{2}\|\phi\|_{V_{\alpha+\widetilde{\sigma}}}^{2},
	\end{align*}
where $\widetilde{\sigma}=\min\{\alpha,2-\alpha\}+\sigma$ and $0 \le \sigma \le\alpha$.  Differentiating $H_{1}(t)$ with respect to $t$, we obtain the following estimate:
	\begin{align*}
	&\frac{d}{dt}H_{1}(t)+\|\phi_{t}\|_{V_{\alpha+\sigma}}^{2}+k\varepsilon\|\phi\|_{V_{2+\widetilde{\sigma}}}^{2}-\varepsilon\|\phi_{t}\|_{V_{\widetilde{\sigma}}}^{2}\nonumber\\
	\le&-\int_{\Omega}(f(u^{1})-f(u^{2}))(A^{\sigma}\phi_{t}+\varepsilon A^{\widetilde{\sigma}}\phi)dx\nonumber\\
	\le& C\|\phi\|_{L^{\frac{2n}{n-2(2+\widetilde{\sigma}-\varepsilon_{0})}}}\left(\|A^{\sigma}\phi_{t}\|_{L^{\frac{2n}{n-2(\alpha-\sigma)}}}+\varepsilon\|A^{\widetilde{\sigma}}\phi\|_{L^{\frac{2n}{n-2(2-\widetilde{\sigma})}}}\right)\nonumber\\
	&\times \left(1+\|u^{1}\|_{L^{\frac{(p-1)n}{2+2\alpha-\varepsilon_{0}}}}^{p-1}+\|u^{2}\|_{L^{\frac{(p-1)n}{2+2\alpha-\varepsilon_{0}}}}^{p-1}+\|u^{1}\|_{L^{\frac{(p-1)n}{4-\varepsilon_{0}}}}^{p-1}+\|u^{2}\|_{L^{\frac{(p-1)n}{4-\varepsilon_{0}}}}^{p-1}\right)\nonumber\\
	\le& \frac{1}{4}\|\phi_{t}\|_{V_{\alpha+\sigma}}^{2}+\frac{k\varepsilon}{2}\|\phi\|_{V_{2+\widetilde{\sigma}}}^{2}+C(\varepsilon)\|\phi\|^{2}.
\end{align*}
Taking $\varepsilon > 0$ sufficiently small, we have
\begin{align*}
	\frac{d}{dt}H_{1}(t)+\frac{1}{2}\|\phi_{t}\|_{V_{\alpha+\sigma}}^{2}+\frac{k\varepsilon}{2}\|\phi\|_{V_{2+\widetilde{\sigma}}}^{2}\le C(\varepsilon)\|\phi\|^{2},
\end{align*}
and for some positive constants $C_{3}, C_{4} > 0$,
\begin{align*}
	C_{3}\|S^{\alpha,0}(t)z^{1}-S^{\alpha,0}(t)z^{2}\|_{\mathcal{H}_{\sigma}}^{2}\le H_{1}(t)\le C_{4} \|S^{\alpha,0}(t)z^{1}-S^{\alpha,0}(t)z^{2}\|_{\mathcal{H}_{\sigma}}^{2}.
\end{align*}
 Here $\mathcal{H}_{\sigma} = V_{2+\sigma} \times V_{\sigma} \times \{0\}$. Applying Gronwall's inequality yields the estimates \eqref{7.2} and \eqref{7.3} for sufficiently small $\varepsilon > 0$, with no dimensional restrictions imposed.	

Based on the foregoing discussion, estimates \eqref{7.2} and \eqref{7.3} yield the quasi-stability of the system and the following estimate
\begin{align*}
\|\phi_{t}\|_{V_{2+\sigma}}^{2}+\|\phi_{tt}\|_{V_{\sigma}}^{2}+\beta\|\psi_{t}\|_{\mathcal{M}_{2+\sigma}}^{2}\le C.
\end{align*}
  For that, for any $t_{1}, t_{2}\in[0,T], z\in \mathcal{B},$ we obtain
\begin{align}\label{key}
	\|S^{\alpha,\beta}(t_{1})z-S^{\alpha,\beta}(t_{2})z\|_{\mathcal{H}_{\alpha}}\le \left(\int_{t_{1}}^{t_{2}}\left\|\frac{d}{dt}S^{\alpha,\beta}(t)z\right\|_{\mathcal{H}_{\alpha}}^{2}dt\right)^{\frac{1}{2}}\le C|t_{1}-t_{2}|^{\frac{1}{2}}.
\end{align}
Thus,  the proof of Theorem \ref{thmm} is proved.

\end{proof}

\begin{theorem}\label{them4.2}
Let Assumption in Theorem \ref{thmm} be satisfied. Then, the dynamical system $(\mathcal{H}, S^{\alpha,\beta}(t))$ possesses a strong $(\mathcal{H}, \mathcal{Z}_{\epsilon} )$-exponential attractor $\mathcal{E}_{exp}^{\alpha,\beta}$ for each $(\alpha,\beta)\in \Xi$. Furthermore, the family $\mathcal{E}_{\exp}^{\alpha,\beta}$ exhibits the following continuity at $(\alpha_0 ,\beta_{0})\in\Xi$:
\begin{align}\label{4.3=}
	\lim_{(\alpha,\beta)\to(\alpha_{0},\beta_{0})}\mbox{dist}_{V_{4-\epsilon} \times V_{4-\epsilon}\times M_{4-\epsilon}} ^{symm}\left(\mathcal{E}_{exp}^{\alpha,\beta}, \mathcal{E}_{exp}^{\alpha_{0},\beta_{0}}\right)=0.
\end{align}
 Moreover, for $\beta \equiv 0$, the exponential attractor is continuous at $\alpha_{0}\in\lbrack0,2)$, in a stronger topology:
\begin{equation}\label{4.33}
	\lim_{\alpha \to \alpha_{0}}
	\operatorname{dist}_{V_{4} \times V_{4-\epsilon}}
	(\mathcal{E}_{\mathrm{exp}}^{\alpha,0},\, \mathcal{E}_{\mathrm{exp}}^{\alpha_{0},0}) = 0.
\end{equation}

\end{theorem}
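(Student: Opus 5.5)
We will apply the abstract criterion Lemma \ref{lem Yang} to the discrete semigroups $V_{\chi}^{k}=S^{\alpha,\beta}(kT)$, $\chi=(\alpha,\beta)\in\Pi$ (the rectangle around $(\alpha_0,\beta_0)$ used in the proof of Theorem \ref{them5}). We work on the bounded closed set $M=[\bigcup_{\chi\in\Pi}\bigcup_{t\ge T_0}S^{\chi}(t)\mathcal{B}_R]_{\mathcal{H}}$, which by Lemma \ref{them3.8} is bounded in $\mathcal Z$ uniformly in $\chi$ and positively invariant for every $S^{\chi}(t)$. We take $X=\mathcal{H}_{\sigma}$ with $\sigma=0$, i.e.\ $X=\mathcal H$. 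Condition (i), uniform Lipschitz continuity, follows from \eqref{7.2}, because the constants $C_0,\gamma_2$ depend only on the uniform bound of $M$ in $\mathcal Z$ and not on $\chi$.

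For (ii) we choose $T$ so large that $C_0e^{-\gamma_3T}\le\theta^2<1$ in \eqref{7.3}. We set $Z=\mathcal H\times C([0,T];V_0)$ with the map $K_\chi v=(v,u(\cdot))$, where $u$ is the first component of $S^\chi(\cdot)v$. The seminorm is $n_Z(v,u)=C_0^{1/2}\sup_{[0,T]}\|u\|$. This seminorm is compact on $Z$: by \eqref{7.2} and the energy identity, the differences lie in $C([0,T];V_2)\cap C^1([0,T];V_0)$, which embeds compactly into $C([0,T];V_0)$ by Aubin--Lions. The Lipschitz bound on $K_\chi$ again follows from \eqref{7.2}. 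Lemma \ref{lem Yang} then yields discrete exponential attractors $\Sigma^\chi$ in $\mathcal H$ with $\mathrm{dist}^{symm}_{\mathcal H}(\Sigma^\chi,\Sigma^{\chi_0})\le C\Gamma(\chi,\chi_0)^\lambda$. Here $\Gamma(\chi,\chi_0)\to0$ by the convergence $\sup_{\xi\in M}\|S^{\chi}(T)\xi-S^{\chi_0}(T)\xi\|_{\mathcal H}\to0$. That convergence is \eqref{6.31}, and it holds uniformly over $M$ because $u_t^{\chi_0}$ ranges over a set bounded in $V_{4-\delta_0}$, so the tail argument behind \eqref{6.27} can be made uniform. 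We then define $\mathcal E^{\chi}_{\exp}=\bigcup_{t\in[0,T]}S^\chi(t)\Sigma^\chi$. Forward invariance is immediate, and exponential attraction in $\mathcal H$ follows from the discrete attraction combined with \eqref{7.2} and the absorption of bounded sets into $M$. Finite fractal dimension follows because $(t,\xi)\mapsto S^\chi(t)\xi$ is Lipschitz in $\xi$ and $\frac12$-Hölder in $t$ by \eqref{key}. Continuity in $\mathcal H$ follows by combining the bound on $\Sigma^\chi$ with the uniform-in-$t\in[0,T]$ version of $\Gamma$.

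To upgrade to the strong topology $\mathcal Z_\epsilon$, we use the interpolation \eqref{df}. Every point involved lies in $M$, which is bounded in $\mathcal Z$, so for all $\xi_1,\xi_2\in M$
\[
\|\xi_1-\xi_2\|_{\mathcal Z_\epsilon}\le C\|\xi_1-\xi_2\|_{\mathcal H}^{\epsilon_*/4}.
\]
This transfers compactness, finite dimensionality (Hölder images preserve finiteness of $\dim_f$), exponential attraction (with rate $\gamma\epsilon_*/4$), and the Hausdorff distance estimate from $\mathcal H$ into $\mathcal Z_\epsilon$. For the exponential attraction of an arbitrary bounded $D$, we first wait until $S^\chi(t)D\subset M$ and then apply \eqref{mu}. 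This proves \eqref{4.3=}.

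For $\beta\equiv0$ and $\alpha_0<2$, we argue in the same way but replace \eqref{df} by the interpolation \eqref{3.5}. That estimate gives Hölder control of $\|z\|_{V_4}$ by $\|z\|_{V_2}+\|z_t\|$, using the equation and the uniform $V_2$-bound of $z_{tt}$ on $M$. Together with the $V_{4-\epsilon}$ bound for $z_t$, this shows that $\mathrm{id}:(M,\mathcal H)\to V_4\times V_{4-\epsilon}$ is Hölder, which gives \eqref{4.33}. The main obstacle is the uniform (over $M$ and over $t\in[0,T]$) convergence $\Gamma(\chi,\chi_0)\to0$ as $\alpha\to\alpha_0$. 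To handle it, we will bound $\|(A^\alpha-A^{\alpha_0})v\|\le|\alpha-\alpha_0|\,\sup_{s\in[a,b]}\|A^{s}\log A\,v\|$, which is controlled by $\|v\|_{V_{4-\delta_0}}$ because $2b<4-\delta_0$ by the choice of $\delta_1$. This yields a rate $C(|\alpha-\alpha_0|+|\beta-\beta_0|)$ uniformly on $M$. At $\alpha_0=2$ this bound degenerates, and there we would instead rely on a compactness/Dini-type argument over the $\mathcal H$-compact set $M$.
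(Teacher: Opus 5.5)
\textbf{Where your route differs from the paper.} Your scaffolding is the paper's: Lemma \ref{lem Yang} applied to $S^{\chi}(kT)$, the trajectory map $K_\chi$, the seminorm $\sup_{[0,T]}\|u\|$, and $\bigcup_{t\in[0,T]}S^\chi(t)\Sigma^\chi$. For the $V_{4-\epsilon}\times V_{4-\epsilon}\times\mathcal{M}_{4-\epsilon}$ statement, however, you upgrade differently, and your version works.

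The paper sets $\mathcal{E}^{\alpha,\beta}_{\exp}=S^{\alpha,\beta}(1)\mathcal{A}^{\alpha,\beta}_{\exp}$. It then splits $S^{\alpha,\beta}(t)\xi-S^{\alpha_0,\beta_0}(t)\xi'$ through $S^{\alpha_0,\beta_0}(t)\xi$, which requires the strong-norm convergence \eqref{key0}. You instead prove everything in $\mathcal{H}$ and transfer at the end with $\|x-y\|_{\mathcal{Z}_\epsilon}\le C\|x-y\|_{\mathcal{H}}^{\epsilon_*/4}$. That inequality is parameter-free and holds for any two points of a common $\mathcal{Z}$-bounded set, so \eqref{key0} is not needed.

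Your bound $|\lambda^\alpha-\lambda^{\alpha_0}|\le|\alpha-\alpha_0|\sup_{s\in[a,b]}\lambda^s|\log\lambda|\le C|\alpha-\alpha_0|\lambda^{2-\delta_0/2}$ also gives an explicit rate. More importantly, it supplies the uniformity of $\Gamma\to0$ over $\mathcal{B}_R$, which the paper only infers from the pointwise limit \eqref{key0}.

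Two small repairs are needed:
\begin{itemize}
\item $n_Z$ is not a compact seminorm on $\mathcal{H}\times C([0,T];V_0)$. Take $Z=C([0,T];V_2)\cap C^1([0,T];V_0)$ instead.
\item $\bigcup_{t\in[0,T]}S^\chi(t)\Sigma^\chi$ need not lie in $M$. It does lie in a $\mathcal{Z}$-bounded set, which is all you use.
\end{itemize}

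\textbf{The gap: the $\beta=0$ part.} This affects \eqref{4.33} and also existence in $\mathcal{Z}_\epsilon=V_4\times V_{4-\epsilon}$. You claim that $\mathrm{id}:(M,\mathcal{H})\to V_4\times V_{4-\epsilon}$ is H\"older by \eqref{3.5}. But \eqref{3.5} is not a norm interpolation; boundedness in $V_4$ alone gives no modulus for the $V_4$-norm in terms of the $V_2$-norm. The estimate uses two things specific to one semigroup $S^{\alpha,0}$:
\begin{itemize}
\item the equation $kA^2z=-z_{tt}-A^\alpha z_t-(f(u^1)-f(u^2))$ for the difference of two of its trajectories;
\item a $V_2$-bound on $z_{tt}$, which holds for states lying on $\alpha$-trajectories after a smoothing time.
\end{itemize}

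Your set $M$ mixes states generated by different $\alpha\in\Pi$. Your $\mathcal{E}^\chi$ also contains the points of $\Sigma^\chi$ themselves (the $t=0$ slice), which are not $S^\chi(t)$-images with $t\ge1$. If $x$ comes from the $\alpha$-flow and $y$ from the $\alpha_0$-flow, subtracting the two equations leaves the extra term $(A^\alpha-A^{\alpha_0})u^y_t$. Hence no bound $\|x-y\|_{V_4\times V_{4-\epsilon}}\le C\|x-y\|_{\mathcal{H}}^\rho$ follows.

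\textbf{How to close it.} Use the paper's device: set $\mathcal{E}^\chi_{\exp}=S^\chi(1)\bigcup_{t\in[0,T]}S^\chi(t)\Sigma^\chi$. For $x=S^\alpha(t)\xi$ and $y=S^{\alpha_0}(t)\xi'$, split through $S^{\alpha_0}(t)\xi$:
\begin{itemize}
\item the same-system term is controlled by \eqref{4.6-};
\item the cross-system term is controlled by \eqref{r}, made uniform over $\mathcal{B}_R$ by applying your Lipschitz bound to $A^{\alpha+\epsilon/2}-A^{\alpha_0+\epsilon/2}$ (this needs exactly $b<2-(\delta_0+\epsilon)/2$).
\end{itemize}
Alternatively, keep the extra term and prove $\|x-y\|_{V_4\times V_{4-\epsilon}}\le C(\|x-y\|_{\mathcal{H}}^\rho+|\alpha-\alpha_0|^\rho)$ for such $x,y$. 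That still yields \eqref{4.33}.
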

\begin{proof}
 We define the functional space $$Z=\left\{\xi_u| \ \xi_{u}=(u, u_{t}, \eta)\in C([0,T]; \mathcal{H})\right\}$$ equipped with norm $$\|\xi_{u}\|_{Z}=\sup_{t\in[0,T]}\|(u,u_{t},\eta)\|_{\mathcal{H}},$$ which is a Banach space. Define the  operators
    \begin{align*}
        V_{\alpha,\beta}^{k}=S^{\alpha,\beta}(kT): \mathcal{B}\to \mathcal{B}
    \end{align*}
 with  $k\in N^{+},~T\ge \max\{2\gamma_{3}^{-1}\ln(\sqrt{C_{0}}+1)+1, t_{B_{0}}+1\},$ and
    \begin{align*}
    K_{\alpha,\beta}:\mathcal{B}\to Z,~ K_{\alpha,\beta}\xi=\xi_{u^{\alpha,\beta}},~ \forall \xi\in \mathcal{B},
    \end{align*}
    where $t_{B_{0}}$ and  $\mathcal{B}$ are defined in \eqref{B}, and $\xi_{u^{\alpha,\beta}}=S^{\alpha,\beta}(t)\xi=(u^{\alpha,\beta}, u_{t}^{\alpha,\beta}, \eta^{\alpha,\beta})$; also define
    \begin{align*}
        n_{Z}: Z\to R, ~n_{Z}(\xi_{u})=C(\mathcal{B})\sup_{t\in[0,T]}\|u\|, ~\forall \xi_{u}\in Z.
    \end{align*}
 Then, $n_{Z}$ is a compact seminorm on $Z$. It follows from \eqref{7.2} and \eqref{7.3} that, for any $\xi^{1},\xi^{2}\in \mathcal{B}$, taking $ \theta=\sqrt{C_{0}}e^{-\frac{1}{2}\gamma_{3}T}<1$, we get
\begin{align*}
    \sup_{(\alpha,\beta)\in\Xi}\|K_{\alpha,\beta}\xi^{1}-K_{\alpha,\beta}\xi^{2}\|_{Z}\le Ce^{\gamma_{2}T}\|\xi^{1}-\xi^{2}\|_{\mathcal{H}},
\end{align*}
\begin{align*}
\sup_{(\alpha,\beta)\in\Xi}\|V_{\alpha,\beta}^{k}\xi^{1}-V_{\alpha,\beta}^{k}\xi^{2}\|_{\mathcal{H}}\le Ce^{k\gamma_{2}T}\|\xi^{1}-\xi^{2}\|_{\mathcal{H}},
\end{align*}
\begin{align*}
\|V_{\alpha,\beta}^{k}\xi^{1}-V_{\alpha,\beta}^{k}\xi^{2}\|_{\mathcal{H}}\le \theta \|\xi^{1}-\xi^{2}\|_{\mathcal{H}}+n_{Z}(K_{\alpha,\beta}\xi^{1}-K_{\alpha,\beta}\xi^{2}).
\end{align*}
Therefore, by Lemma \ref{lem Yang}, for each $(\alpha,\beta)\in \Xi$, the discrete dynamical system $(\mathcal{B},V_{\alpha,\beta}^{k})$, whose norm is equivalent to that of $\mathcal{H}$, admits an exponential attractor $M_{\alpha,\beta}$.
Let
\begin{align*}
	\mathcal{A}_{exp}^{\alpha,\beta}=\bigcup_{0\le t\le T}S^{\alpha,\beta}(t)M_{\alpha,\beta},~ (\alpha,\beta)\in \Xi,
\end{align*}
which is a forward invariant compact set in $\mathcal{H}$. By \eqref{7.2} and \eqref{key}, we get
\begin{align*}
	\dim_{f}\mathcal{A}_{exp}^{\alpha,\beta}\le 2 \left(1+\dim_{f}M_{\alpha,\beta}\right)<\infty.
\end{align*}
For any bounded set $D\subset\mathcal{H}$, there exists $t_D>0$ such that $S^{\alpha,\beta}(s)D\subset \mathcal{B}$ for all $s\ge t_D$. Writing $t=kT+t_{D}+\tau$ with $k\in\mathbb{N}$ and $\tau\in[0,T)$, we deduce from \eqref{7.2} the existence of a constant $\kappa>0$ such that
\begin{align}\label{4.11}
	\mbox{dist}_{\mathcal{H}}\left(S^{\alpha,\beta}(t)D, \mathcal{A}_{exp}^{\alpha,\beta}\right)\le & \mbox{dist}_{\mathcal{H}}\left(S^{\alpha,\beta}(\tau)S^{\alpha,\beta}(kT)S^{\alpha,\beta}(t_{D})D, S^{\alpha,\beta}(\tau)M_{\alpha,\beta}\right)\nonumber\\
	\le& Ce^{\gamma_{2}\tau}\mbox{dist}_{\mathcal{H}}\{V_{\alpha,\beta}^{k}\mathcal{B}, \mathcal{M}_{\alpha,\beta}\}\le C_{T}e^{-\kappa (t-t_{D})}.
\end{align}
Here, $C_{T}= Ce^{(\gamma_{2}+\kappa)T}$. Thus, $\mathcal{A}_{exp}^{\alpha,\beta}$ is an exponential attractor of the continuous dynamical system $(\mathcal{H}, S^{\alpha,\beta}(t))$.
Let  $$\mathcal{E}_{exp}^{\alpha,\beta}=S^{\alpha,\beta}(1)\mathcal{A}_{exp}^{\alpha,\beta},~(z,z_{t},\eta)=S^{\alpha,\beta}(t)
\xi_{1}-S^{\alpha,\beta}(t)\xi_{2}, \mbox{~for~any ~} \xi_{1},\xi_{2}\in \mathcal{B}.$$
	For \(\beta>0\) and \(\alpha\in[0,2]\), by the interpolation inequality,
	we have
	\begin{align}\label{4.6}
		&\|S^{\alpha,\beta}(t)\xi_{1}
		-S^{\alpha,\beta}(t)\xi_{2}\|_{ V_{4-\epsilon}\times V_{4-\epsilon}\times \mathcal{M}_{4-\epsilon}}
		\nonumber\\
		&\leq C\Big(
		\|z\|_{V_4}^{\frac{2-\epsilon}{2}}
		\|z\|_{V_2}^{\frac{\epsilon}{2}}
		+
		\|z_t\|_{V_{4-\delta_0}}^{\frac{4-\epsilon}{4-\delta_0}}
		\|z_t\|^{\frac{\epsilon-\delta_0}{4-\delta_0}}+\beta
		\|\eta\|_{\mathcal M_4}^{\frac{2-\epsilon}{2}}
		\|\eta\|_{\mathcal M_2}^{\frac{\epsilon}{2}}
		\Big)
		\nonumber\\
		&\leq
		C
		\|S^{\alpha,\beta}(t)\xi_{1}
		-S^{\alpha,\beta}(t)\xi_{2}\|_{\mathcal H}^{\frac{\epsilon-\delta_0}{4-\delta_0}} .
	\end{align}
	Consequently, the mapping $
	S^{\alpha,\beta}(1):\mathcal A_{\exp}^{\alpha,\beta}	\subset\mathcal H	\to \mathcal A_{\exp}^{\alpha,\beta}\subset\mathcal Z_{\epsilon}$	is $\frac{\epsilon-\delta_0}{4-\delta_0}$-H\"older continuous.
	
	For the  case \(\beta=0\), we distinguish two cases according to the damping exponent. When $\beta=0$ and $\alpha=2$, the above interpolation argument can be applied without the memory component, which yields $$S^{2,0}(1):
	\mathcal A_{\exp}^{2,0}\subset V_2\times L^2(\Omega)\to\mathcal A_{\exp}^{2,0}\subset V_{4-\epsilon}\times V_{4-\epsilon}$$	being \(\frac{\epsilon-\delta_0}{4-\delta_0}\)-H\"older continuous. For \(\beta=0\) and \(\alpha\in\lbrack0,2)\), following an argument analogous to that
	used in \eqref{3.5}, one can obtain
	\begin{align}\label{4.6-}
		\|S^{\alpha,0}(t)\xi_1 -S^{\alpha,0}(t)\xi_2\|_{V_4\times V_{4-\varepsilon}}
		\leq	C	\|S^{\alpha,0}(t)\xi_1-S^{\alpha,0}(t)\xi_2\|_{\mathcal H}^{\rho}.
	\end{align}
 Therefore, $	S^{\alpha,0}(1):	\mathcal {A}_{\exp}^{\alpha,0}\subset V_2\times L^2(\Omega)\to\mathcal{A}_{\exp}^{\alpha,0}	\subset	V_{4}\times V_{4-\epsilon}$	is $\rho-$ H\"older continuous.
	
	Thus, $\mathcal{E}_{\exp}^{\alpha,\beta}$ is a forward invariant compact set in $\mathcal{Z}_{\epsilon}$ and has finite dimension
 \begin{align*}
 	\mbox{dim}_{f}\left(\mathcal{E}_{exp}^{\alpha,},\mathcal{Z}_{\epsilon}\right)\le k^{-1}\mbox{dim}_{f}\left(\mathcal{A}_{exp}^{\alpha,\beta},\mathcal{H}\right)<\infty,
 \end{align*}
for some constant  $k=\min\{\rho,\frac{\epsilon-\delta_{0}}{4-\delta_{0}}\}$. Moreover, for any above  bounded set $D\subset \mathcal{H}$,  we get from \eqref{4.11}-\eqref{4.6-} that for $t>t_{D}+1$,
 \begin{align*}
 	\mbox{dist}_{\mathcal{Z}_{\epsilon}}\left(S^{\alpha,\beta}(t)D, \mathcal{E}_{exp}^{\alpha,\beta}\right)=& \mbox{dist}_{\mathcal{Z}_{\epsilon}}\left(S^{\alpha,\beta}(1)S^{\alpha,\beta}(t-1)D, S^{\alpha,\beta}(1)\mathcal{A}_{exp}^{\alpha,\beta}\right)\nonumber\\
 	\le& C \left[ \mbox{dist}_{\mathcal{H}}\left(S^{\alpha,\beta}(t-1)D, \mathcal{A}_{exp}^{\alpha,\beta}\right)\right]^{k}\le Ce^{-\widetilde{\kappa} (t-t_{D})},
 \end{align*}
 where $\widetilde{\kappa}>0$ is a constant. Hence, $\mathcal{E}_{exp}^{\alpha,\beta}$ is a strong $(\mathcal{H}, \mathcal{Z}_{\epsilon})- $ exponential attractor of the semigroup $S^{\alpha,\beta}(t)$.

Next, we  prove the continuity of the strong $(\mathcal{H}, \mathcal{Z}_{\epsilon})$-exponential attractor with respect to $(\alpha_{0},\beta_{0}) \in \Xi$. We define the functional
\begin{align*}
	\Gamma((\alpha,\beta),(\alpha_{0},\beta_{0}))=\sup_{\xi\in \mathcal{B}_{R}}\|V_{\alpha,\beta}^{k}\xi-V_{\alpha_{0},\beta_{0}}^{k}\xi\|_{\mathcal{H}}\to 0, ~~\mbox{as}~~(\alpha,\beta)\to(\alpha_{0}, \beta_{0}).
\end{align*}
which follows from \eqref{key0}, where $\mathcal{B}_{R}$ is as defined in \eqref{key1}. Therefore, by Lemma \ref{lem Yang}, the exponential attractor $M_{\alpha,\beta}$ of the dynamical system $(\mathcal{B}_{R}, V_{\alpha,\beta}^{k})$ is continuous with respect to $(\alpha_{0},\beta_{0}) \in\Xi$. More precisely,
\begin{align}\label{4.4}
	\mbox{dist}_{\mathcal{H}}^{symm}(M_{\alpha,\beta}, M_{\alpha_{0},\beta_{0}})\le C(	\Gamma((\alpha,\beta),(\alpha_{0},\beta_{0})))^{\mu}\to 0, \mbox{~as~}(\alpha,\beta)\to(\alpha_{0}, \beta_{0}),
\end{align}
where $0 < \mu < 1$. For any $\xi_{u^{\alpha,\beta}}\in \mathcal{E}_{\exp}^{\alpha,\beta}$, there exist $\xi \in M_{\alpha,\beta}$ and $t\in[1,T+1]$ such that  $\xi_{u^{\alpha,\beta}}=S^{\alpha,\beta}(t)\xi$. Then, by virtue of \eqref{key0}, \eqref{7.2}, \eqref{4.6}, \eqref{4.6-} and \eqref{4.4}, we obtain
\begin{align*}
	&\mbox{dist}_{\mathcal{Z}_{\epsilon}}\left(\mathcal{E}_{exp}^{\alpha,\beta}, \mathcal{E}_{exp}^{\alpha_{0},\beta_{0}}\right)\nonumber\\
	=&\sup_{\xi_{u^{\alpha,\beta}}\in \mathcal{E}_{\exp}^{\alpha,\beta}}\inf_{\xi_{u^{\alpha_{0},\beta_{0}}}\in \mathcal{E}_{\exp}^{\alpha_{0},\beta_{0}}}\|\xi_{u^{\alpha,\beta}}-\xi_{u^{\alpha_{0},\beta_{0}}}\|_{\mathcal{Z}_{\epsilon}}\nonumber\\
	\le&\sup_{t\in[1,T+1]} \sup_{\xi\in M_{\alpha,\beta}}\inf_{\xi^{'}\in M_{\alpha_{0},\beta_{0}}}\|S^{\alpha,\beta}(t)\xi-S^{\alpha_{0},\beta_{0}}(t)\xi^{'}\|_{\mathcal{Z}_{\epsilon}}\nonumber\\
	\le&\sup_{t\in[1,T+1]}\sup_{\xi\in \mathcal{B}_{R}}\|S^{\alpha,\beta}(t)\xi-S^{\alpha_{0},\beta_{0}}(t)\xi\|_{\mathcal{Z}_{\epsilon}}\nonumber\\
	&+\sup_{t\in[1,T+1]}\sup_{\xi\in M_{\alpha,\beta}}\inf_{\xi^{'}\in M_{\alpha_{0},\beta_{0}}}\|S^{\alpha_{0},\beta_{0}}(t)\xi-S^{\alpha_{0},\beta_{0}}(t)\xi^{'}\|_{\mathcal{Z}_{\epsilon}}\nonumber\\
	\le&\sup_{t\in[1,T+1]}\sup_{\xi\in \mathcal{B}_{R}}\|S^{\alpha,\beta}(t)\xi-S^{\alpha_{0},\beta_{0}}(t)\xi\|_{\mathcal{Z}_{\epsilon}}+Ce^{\frac{k}{2}T}\sup_{\xi\in M_{\alpha,\beta}}\inf_{\xi^{'}\in M_{\alpha_{0},\beta_{0}}}\|\xi-\xi^{'}\|_{\mathcal{H}}^{k}\nonumber\\
	\le& \sup_{t\in[1,T+1]}\sup_{\xi\in \mathcal{B}_{R}}\|S^{\alpha,\beta}(t)\xi-S^{\alpha_{0},\beta_{0}}(t)\xi\|_{\mathcal{Z}_{\epsilon}}+Ce^{\frac{k}{2}T}\left[\mbox{dist}_{\mathcal{H}}(M_{\alpha,\beta}, M_{\alpha_{0},\beta_{0}})\right]^{k}\to 0
\end{align*}
as $(\alpha,\beta)\to( \alpha_{0},\beta_{0})$.

 For fixed $\beta\equiv 0$, the continuity of the family of exponential attractors with respect to $\alpha_{0}\in\lbrack0,2)$ follows from \eqref{r} and \eqref{4.6-} by an argument analogous to that used above, yielding \eqref{4.33}. This completes the proof of the theorem.

\end{proof}

\vspace{0.6cm}

{\bf Acknowledgements}

The work was supported partly by the NSF of China (12171094), the Shanghai Key Laboratory for Contemporary Applied Mathematics (08DZ2271900), and Fuyang Normal University (2025KYQD0157,2025AHGXZK40563), Outstanding Youth Research Project of Anhui Universities (2023AH030075).

\vspace{0.4cm}

{\bf Contributions}

The authors contributed equally and significantly in writing this paper.

\vspace{0.4cm}

{\bf Data Availability}

Data sharing is not applicable to this article as no datasets were generated or analyzed during the current study.

\vspace{0.4cm}

{\bf Conflict of interest}

The authors declare that they have no conflict of interest.

\vspace{0.4cm}

{\bf Ethical Statement}

There are no ethical concerns applicable to our research.

\end{document}